\documentclass[a4paper,10pt, english]{amsart}

\usepackage{amsmath,amssymb,amsthm,mathtools}

\usepackage[centering]{geometry}


\usepackage[backend=biber, style = numeric, doi=false,isbn=false,url=false,eprint=false]{biblatex}
\renewbibmacro{in:}{}
\DeclareFieldFormat{pages}{#1}
\DeclareFieldFormat[article,periodical]{volume}{\mkbibbold{#1}}

\addbibresource{Bibliography.bib}


\usepackage{hyperref}
\usepackage[noabbrev,capitalise]{cleveref} 

\newtheorem{mainthm}{Theorem}

\newtheorem{theorem}{Theorem}[section]
\newtheorem{lemma}[theorem]{Lemma}
\newtheorem{proposition}[theorem]{Proposition}

\theoremstyle{definition}
\newtheorem{definition}[theorem]{Definition}
\newtheorem{remark}{Remark}

\numberwithin{equation}{section}

\let\svqty\qty
\usepackage{physics}
\let\qty\svqty

\usepackage[shortlabels]{enumitem}
\usepackage[numbered]{bookmark}

\usepackage{dsfont} 

\usepackage{todonotes}



\newcommand*{\complexs}{\mathbb{C}}
\newcommand{\quaternions}{\mathbb{H}}

\newcommand*{\set}[1]{\l\{ #1 \r\}} 

\renewcommand{\l}{\left}
\renewcommand{\r}{\right}



\renewcommand*{\epsilon}{\varepsilon}

\let\phii\phi
\renewcommand*{\phi}{\varphi}

\renewcommand*{\tilde}{\widetilde}

\newcommand{\identity}{\mathds{1}} 
\newcommand{\CP}{{\complexs P}}
\newcommand{\sphere}[1]{\mathbb{S}^{#1}}

\newcommand{\lift}[1]{\widetilde{#1}}

\newcommand{\Jo}{J_1}

\newcommand{\Dtwo}{\mathcal{D}_1^2}
\newcommand{\Dfour}{\mathcal{D}_2^4}

\newcommand{\FS}[1]{{{#1}_{1}} }
\newcommand{\FSm}{g_{1}}
\newcommand{\FSnabla}{\nabla^{1}}

\DeclareMathOperator{\SU}{SU}
\DeclareMathOperator{\U}{U}
\DeclareMathOperator{\Sp}{Sp}

\DeclareMathOperator{\Iso}{Iso}

\DeclareMathOperator{\Span}{Span}


\title[Hypersurfaces of any homogeneous $\CP^3$]{Hypersurfaces of any homogeneous $\CP^3$}

\author{Michaël Liefsoens}

\address{M. Liefsoens, KU\ Leuven, Department of Mathematics, Celestij\-nenlaan 200B -- Box 2400, 3001 Leuven, Belgium}
\email{michael.liefsoens@kuleuven.be}
\thanks{M. Liefsoens is supported by the Research Foundation Flanders (FWO) with project 11PG324N}

\subjclass[2020]{53C15, 53C55, 53C30, 53C42}
\keywords{nearly Kähler manifold, almost product structure, hypersurface}


\begin{document}

\begin{abstract}
	Hypersurfaces are studied and classified under multiple additional assumptions in any Riemannian homogeneous space $(\CP^3, g_a)$, including nearly Kähler $\CP^3$. Notably, all extrinsically homogeneous hypersurfaces are classified in all these spaces, with an explicit family of examples. Moreover, for nearly Kähler $\CP^3$, all Hopf hypersurfaces are classified. Finally, Codazzi-like hypersurfaces (and in particular parallel and totally geodesic hypersurfaces), totally umbilical hypersurfaces and constant sectional curvature hypersurfaces are proven to not exist in any homogeneous $\CP^3$. 
\end{abstract}

\maketitle

\section{Introduction}

In the four homogeneous nearly Kähler six-manifolds, hypersurfaces have received significant attention. Mostly in $\sphere{6}$ and the nearly Kähler $\sphere{3}\times \sphere{3}$, much work has been done. In \Cite{anarella2024}, an overview is given of (among others) the study of hypersurfaces in the four homogeneous nearly Kähler six-manifolds. It is noteworthy to mention that many studies have been undertaken in the nearly Kähler $\sphere{3}\times \sphere{3}$, with many (non-existence) results in the literature. Examples include non-existence of totally umbilical hypersurfaces \Cite{hu2018}; hypersurfaces with parallel second fundamental form \Cite{hu2018}; and the study of (Hopf) hypersurfaces with a number of distinct principal curvatures \Cite{hu2020a,yao2022}.

However, in the nearly Kähler flag manifold and nearly Kähler $\CP^3$, only \Citeauthor{deschamps2021} studied hypersurfaces in \Cite{deschamps2021}. They considered the almost contact structure $\phi$ induced from the almost complex structure $J$ given by 
\begin{equation}\label{eq:almost_contact_structure}
    \phi(X) = J X - g(JX,N)N,
\end{equation}
where $N$ is the unit normal.
They considered when this tensor $\phi$ commutes with the shape operator of the unit normal $A_N$. This condition implies being nearly cosymplectic ($\nabla \phi$ skew-symmetric) and being Hopf ($J N$ is an eigenvector of $A_N$, or, equivalently, the integral curve of $JN$ is totally geodesic). They proved that no hypersurface of the flag manifold, nor of nearly Kähler $\CP^3$, satisfies $A_N \circ \phi = \phi \circ A_N$. Moreover, they also excluded totally geodesic and totally umbilical hypersurfaces in both the flag manifold and the nearly Kähler $\CP^3$. 

In this work, the main focus is the nearly Kähler $\CP^3$ and the main result reads:
\begin{mainthm}\label{thm:main_theorem_NK_CP3}
    Let $\mathcal{H}$ be a hypersurface in the nearly Kähler $\CP^3$ with unit normal $N$, and let $\pi:\sphere{7}\to \CP^3$ be the Hopf fibration.
    The following are equivalent
    \begin{enumerate}
        \item $\mathcal{H}$ is Hopf,
        \item $\mathcal{H}$ is Hopf for both Kähler and nearly Kähler $\CP^3$,
        \item $\mathcal{H}$ is Hopf and horizontal, i.e. $N$ is horizontal with respect to the twistor fibration $\tau: \CP^3 \to \sphere{4}$,
        \item $\mathcal{H}$ is extrinsically homogeneous,
        \item $\mathcal{H}$ has constant principal curvature and is horizontal,
        \item $\mathcal{H}$ is the pre-image of a geodesic three sphere in $\sphere{4}$,
        \item $\mathcal{H}$ is congruent to one of the examples $\sphere{3}\times\sphere{3}\to \CP^3 : (p,q) \mapsto \pi(\cos  t\, p, \sin t\, q)$. 
    \end{enumerate}

    Let $\phi$ be the contact structure induced on $\mathcal{H}$, cfr. \Cref{eq:almost_contact_structure}. 
    Then, moreover, the following are equivalent
    \begin{enumerate}
        \item $\mathcal{H}$ is Hopf with $A_N JN = 0$
        \item $\mathcal{H}$ is Hopf and minimal
        \item $\phi A_N + A_N \phi = 0$
        \item $\mathcal{H}$ is the pre-image under the twistor fibration of a totally geodesic three sphere in $\sphere{4}$.
        \item $\mathcal{H}$ is congruent to the example $\sphere{3}\times\sphere{3}\to \CP^3 : (p,q) \mapsto \pi\l( \frac{1}{\sqrt{2}} (p, q) \r)$. 
    \end{enumerate}
\end{mainthm}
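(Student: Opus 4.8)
The plan is to reduce the claim (the second block of equivalences), using the first block of \Cref{thm:main_theorem_NK_CP3} (which I take as given), to a computation on one explicit one-parameter family. That block identifies the Hopf hypersurfaces of nearly Kähler $\CP^3$ with the horizontal ones and, up to congruence, with the members $\mathcal{H}_t:=F_t(\sphere{3}\times\sphere{3})$, where $F_t(p,q)=\pi(\cos t\,p,\sin t\,q)$ for $t\in(0,\pi/2)$; equivalently $\mathcal{H}_t=\tau^{-1}(\Sigma_t)$ for a geodesic three-sphere $\Sigma_t\subset\sphere{4}$, with $t=\pi/4$ corresponding to the equatorial, hence totally geodesic, $\Sigma_{\pi/4}=\sphere{3}\subset\sphere{4}$. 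So all five conditions must come down to singling out the one member $\mathcal{H}_{\pi/4}$.

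The core step is to compute the shape operator $A_N$ of $\mathcal{H}_t$ as a function of $t$. Since $F_t$ lifts to the explicit map $(p,q)\mapsto(\cos t\,p,\sin t\,q)$ into $\sphere{7}$, the cleanest route is to compute the second fundamental form of this submanifold of $\sphere{7}$ in a frame adapted to the Hopf fibration $\pi$, then push the data down with the O'Neill tensors and the known relation between the Fubini--Study and nearly Kähler metrics on $\CP^3$; the twistor fibration $\tau$ organises the result. Since $\mathcal{H}_t$ is Hopf, $JN$ is principal, say $A_NJN=\mu(t)JN$, and on the four-dimensional contact distribution $A_N$ is symmetric with eigenvalues that are elementary trigonometric functions of $t$ (note the principal directions need not be adapted to the splitting of the contact distribution into $\tau$-vertical and horizontal parts). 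Conditions (1) and (2) then read $\mu(t)=0$ and $\operatorname{trace}A_N(t)=0$, and one checks from these formulas that each holds exactly at $t=\pi/4$; this yields (1)$\Leftrightarrow$(2), and (1)$\Leftrightarrow$(4)$\Leftrightarrow$(5) follows by tracking the correspondence $t\leftrightarrow\Sigma_t$ and evaluating $\cos(\pi/4)=\sin(\pi/4)=1/\sqrt2$.

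It remains to weave in (3). One half is abstract: since $J$ is orthogonal, $g(JN,N)=0$, so $JN$ is tangent, and \Cref{eq:almost_contact_structure} gives $\phi(JN)=0$ with $\ker(\phi|_{T\mathcal{H}})=\Span\{JN\}$; evaluating $\phi A_N+A_N\phi=0$ on $JN$ gives $\phi(A_NJN)=0$, so $A_NJN\in\Span\{JN\}$ and $\mathcal{H}$ is Hopf. Moreover, on the contact distribution $\phi$ is an almost complex structure, so anticommutation with the symmetric operator $A_N$ forces its contact eigenvalues to occur in pairs $\pm\alpha$ (from $A_Ne=\alpha e$ one gets $A_N\phi e=-\alpha\phi e$ with $\phi e\perp e$), with $\phi$ interchanging the corresponding eigenspaces; this reduces (3) for $\mathcal{H}_t$ to a condition that, confronted with the explicit $A_N(t)$, holds precisely at $t=\pi/4$ --- and at that value one checks $\mu(\pi/4)=0$ as well. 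With the previous paragraph this gives (3)$\Leftrightarrow\mathcal{H}=\mathcal{H}_{\pi/4}\Leftrightarrow$(5), closing all the equivalences.

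The main obstacle is the explicit eigenstructure of $A_N$ on $\mathcal{H}_t$: one has to carry the nearly Kähler $J$, the twistor fibration $\tau$ and the Hopf fibration $\pi$ through a single computation and keep the vertical and horizontal parts of the contact distribution apart, and --- crucially --- condition (3) is strictly stronger than minimality, as it constrains eigenvectors and not merely the trace; so it is the full eigendata of $A_N(t)$, not just $\mu(t)$ and $\operatorname{trace}A_N(t)$, that one needs to see that (3) isolates $t=\pi/4$ and, conversely, genuinely holds there. A secondary point to watch is to use the nearly Kähler notion of ``Hopf'' throughout and to invoke the first block of the theorem so that horizontality of $N$, hence the classification of the $\mathcal{H}_t$, is available from the outset.
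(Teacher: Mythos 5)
Your handling of the second block is essentially the paper's own argument: the implication $\phi A_N+A_N\phi=0\Rightarrow\mathcal{H}$ Hopf is obtained exactly as in the paper (evaluate on $JN$, note $\phi X=(JX)^\top$ has kernel $\Span\{JN\}$ in $T\mathcal{H}$), and the remaining equivalences of that block are read off from the explicit family, whose relevant data ($A_NJN=\sqrt{2}\cot(2t)\,JN$, minimality exactly at $t=\pi/4$, and the projected geodesic three-sphere being totally geodesic exactly at $t=\pi/4$) are computed in \Cref{sec:examples}. Your final step ``confronted with the explicit $A_N(t)$, condition (3) holds precisely at $t=\pi/4$'' is left as an asserted computation needing the full shape operator of $\mathcal{H}_t$ (available from the proof of \Cref{thm:classification_Hopf}), but that is on par with the level of detail the paper itself gives at this point.

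The genuine gap is that the statement is the whole of \Cref{thm:main_theorem_NK_CP3}, and you assume the entire first block. Most of it is indeed citable as standalone results (\Cref{thm:hopf_is_horizontal}, \Cref{thm:classification_Hopf}, the classification of extrinsically homogeneous hypersurfaces in \Cref{sec:extr_homo}, and \Cref{sec:examples}), but item (5) of the first block --- a horizontal hypersurface with constant principal curvatures is Hopf, hence congruent to a member of the family --- is part of the statement, is proved nowhere else in the paper, and does not follow from the results you invoke. The paper's dedicated proof section consists of exactly two items: the $\phi A_N+A_N\phi=0$ step, which you reproduce, and this constant-principal-curvature step, which you omit. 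The missing argument is short but necessary: in the frame of \Cref{thm:frame_horizontal} the determinant of the second fundamental form equals $\alpha_{33}/16$, so constancy of the principal curvatures forces $\alpha_{33}=g(A_NJN,JN)$ to be constant, and then the Codazzi equation \eqref{eq:Codazzi} applied to $(e_1,e_3,e_3)$ and $(e_2,e_3,e_3)$ yields $\alpha_{34}=\alpha_{35}=0$, i.e.\ $JN$ is a principal direction. Without this (or a substitute), the equivalence involving condition (5) of the first block is unproved, and hence so is the theorem as stated.
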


Using the description of \Cite{liefsoens2024}, many results can be extended to all homogeneous metrics on $\CP^3$ (denoted by $(\CP^3, g_a)$) as the nearly Kähler structure is interwoven tightly with the other metric structures. As such, all extrinsically homogeneous hypersurfaces of $(\CP^3, g_a)$ can be classified. Explicitly, consider the examples
\begin{equation}\label{eq:family_Hopf_hypersurfaces_intro}
    \pi \circ \psi_t: \sphere{3}\times\sphere{3} \to \CP^3: (p,q) \mapsto \pi( \cos t \, p, \sin t\, q ), \qquad t \in (0, \pi/2),
\end{equation} 
where $\pi:\sphere{7}\to \CP^3$ is the Hopf fibration, then we have 
\begin{mainthm}
    Let $g_a$ be any homogeneous metric on $\CP^3$, except for the Fubini-Study metric (which corresponds to $a=1$), then all extrinsically homogeneous hypersurfaces of $(\CP^3,g_a)$ are locally congruent to one of the examples of \Cref{eq:family_Hopf_hypersurfaces_intro}. In particular, they are all Hopf and horizontal. 
\end{mainthm}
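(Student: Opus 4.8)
The plan is to reduce the classification to \Cref{thm:main_theorem_NK_CP3} by observing that the extrinsically homogeneous hypersurfaces are exactly the suitable orbits of the isometry group, which is the same for every $a\neq 1$. First I would record the elementary reduction: if $\mathcal H$ is a connected extrinsically homogeneous hypersurface of $(\CP^3,g_a)$ and $G\subseteq\Iso(\CP^3,g_a)$ is its setwise stabiliser, then $G$ is a compact Lie group (as $\CP^3$ is compact), so $\mathcal H$ is a finite union of orbits of the identity component $G_0$, all of dimension $\dim\mathcal H$ and each closed; by connectedness $\mathcal H=G_0\cdot x$. Thus $\mathcal H$ is a $5$-dimensional orbit of a connected subgroup $G_0\subseteq\Iso_0(\CP^3,g_a)$, and conversely any such orbit is extrinsically homogeneous. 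I would then invoke \Cite{liefsoens2024}: for every $a\in(0,\infty)\setminus\{1\}$ the identity component $\Iso_0(\CP^3,g_a)$ is $\Sp(2)/\{\pm\identity\}\cong\mathrm{SO}(5)$, acting on $\CP^3$ exactly as the isometry group of the round $\sphere{4}$ acts on its twistor space, with $\tau:\CP^3\to\sphere{4}$ equivariant over $\sphere{4}=\mathrm{SO}(5)/\mathrm{SO}(4)$; in particular this action is independent of $a\neq 1$.

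For the classification itself I would argue as follows. Since $\mathrm{SO}(5)$ acts transitively on $\CP^3=\mathrm{SO}(5)/\U(2)$, the group $G_0$ is proper, and $5=\dim\mathcal H=\dim G_0-\dim(G_0)_x$ forces $\dim G_0\ge 5$. Because the only connected proper subgroups of $\mathrm{SO}(5)$ of dimension at least $5$ are the conjugates of $\mathrm{SO}(4)$, the group $G_0$ is, after conjugation, the isotropy $\mathrm{SO}(5)_{p_0}$ of some point $p_0\in\sphere{4}$. Its orbits on $\sphere{4}$ are $\{\pm p_0\}$ and the geodesic three-spheres, so, using the equivariance of $\tau$, the orbit $\Sigma:=\tau(\mathcal H)=G_0\cdot\tau(x)$ is a geodesic $\sphere{3}\subset\sphere{4}$ and, comparing dimensions, $\mathcal H$ is open in the five-dimensional $\tau^{-1}(\Sigma)$. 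A short computation then shows that $\mathrm{SO}(5)_{p_0}$ in fact acts transitively on $\tau^{-1}(\Sigma)$: its isotropy at a point of $\Sigma$ is the copy of $\mathrm{SO}(3)$ fixing the radial direction, and the identity $\mathrm{SO}(3)\cdot\U(2)=\mathrm{SO}(4)$ shows this $\mathrm{SO}(3)$ acts transitively on the twistor fibre $\sphere{2}\cong\mathrm{SO}(4)/\U(2)$. Hence $\mathcal H=\tau^{-1}(\Sigma)$, and by the equivalence of items (6) and (7) of \Cref{thm:main_theorem_NK_CP3} the preimages under $\tau$ of geodesic three-spheres are precisely the examples \Cref{eq:family_Hopf_hypersurfaces_intro}. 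These are horizontal by construction (the horizontal distribution of $\tau$ being $a$-independent) and Hopf: for the nearly Kähler value of $a$ this is \Cref{thm:main_theorem_NK_CP3}, and for general $a$ it follows from a short direct computation with the (homogeneous, hence very rigid) shape operator. Alternatively, since by the first paragraph the extrinsically homogeneous hypersurfaces of $(\CP^3,g_a)$ are exactly the $5$-dimensional orbits of connected subgroups of $\mathrm{SO}(5)$, and since this group and its action are the same for all $a\neq 1$, the whole classification for $a\neq 1$ follows at once from the nearly Kähler case, i.e.\ from \Cref{thm:main_theorem_NK_CP3}.

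The substantive content is imported rather than developed here: the identification of $\Iso_0(\CP^3,g_a)$ and its twistor-equivariant action uniformly in $a\neq 1$ (from \Cite{liefsoens2024}), the Lie-theoretic fact that conjugates of $\mathrm{SO}(4)$ are the only connected proper subgroups of $\mathrm{SO}(5)$ of dimension at least $5$, and the fibre computation $\mathrm{SO}(3)\cdot\U(2)=\mathrm{SO}(4)$. The only step where one genuinely uses the geometry of the twistor fibration rather than dimension counts is this last one — recognising which $\mathrm{SO}(3)$ acts on the twistor fibre $\sphere{2}$ and checking it is transitive — so that is where I would expect the main (though still modest) obstacle to lie.
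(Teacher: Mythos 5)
Your argument is correct, but it follows a genuinely different route from the paper. The paper never identifies the symmetry group: it takes an arbitrary closed subgroup acting transitively on $\mathcal{H}$, uses only that every isometry of $(\CP^3,g_a)$, $a\neq 1$, preserves $J$ and $P$ to conclude that the angle $\theta$ and all components $\alpha_{ij}$ in the adapted frames of \Cref{thm:frame_horizontal,thm:frame_non_horizontal} are constant, then kills the non-horizontal case with a single Codazzi identity (a sum of squares plus $1$ forced to vanish) and shows in the horizontal case that constancy of $\alpha_{33}$ forces $A_NJN=\alpha_{33}JN$, i.e.\ Hopf, after which \Cref{thm:classification_Hopf} finishes. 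You instead identify $\Iso(\CP^3,g_a)_\circ\cong\Sp(2)/\{\pm\identity\}\cong \mathrm{SO}(5)$ acting $\tau$-equivariantly, note that a five-dimensional orbit needs a connected proper subgroup of dimension at least five, hence a conjugate of $\mathrm{SO}(4)=\mathrm{SO}(5)_{p_0}$, and conclude from the fibre computation $\mathrm{SO}(3)\cdot\U(2)=\mathrm{SO}(4)$ that the orbit is $\tau^{-1}$ of a distance three-sphere in $\sphere{4}$. This gives a cleaner structural (cohomogeneity-one) picture and avoids all curvature computations for this theorem, but at the price of inputs the paper does not need: the paper only records that the isometries are the Kähler isometries commuting with $P$, so you must still argue that this subgroup of $PSU(4)$ is $P\Sp(2)$ and that its action descends through $\tau$ to the standard $\mathrm{SO}(5)$-action on $\sphere{4}$, besides the subgroup classification in $\mathrm{SO}(5)$ (both true, and your dimension count correctly excludes the exceptional orbits $\tau^{-1}(\pm p_0)$).

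Two cautions. First, your appeal to the equivalence of items (6) and (7) of \Cref{thm:main_theorem_NK_CP3} is legitimate only because that equivalence rests on \Cref{sec:examples} (all geodesic three-spheres of a given radius are congruent and the family realises every radius), not on the extrinsic-homogeneity statement; it is worth saying this explicitly, since item (4) of \Cref{thm:main_theorem_NK_CP3} is proved in the paper by the very theorem you are proving. Second, and for the same reason, your closing ``alternative'' — deducing the general-$a$ case ``at once from Theorem A'' — is circular if read as citing item (4) as a black box; it is sound only as the reduction, common to both proofs, that the isometry group (hence the class of extrinsically homogeneous hypersurfaces) is the same for all $a\neq 1$, with the nearly Kähler case then supplied by your own orbit argument or by the paper's frame computation.
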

We will observe that the family of \Cref{eq:family_Hopf_hypersurfaces_intro} are all extrinsically different, yet display an intrinsic mirror symmetry around the unique minimal family member. 

Finally, multiple non-existence results are obtained in all $(\CP^3, g_a)$. They can be summarised as follows.
\begin{mainthm}
    The homogeneous $(\CP^3, g_a)$ does not admit any Codazzi-like (and in particular no parallel or totally geodesic); totally umbilical; or constant sectional curvature hypersurfaces.
\end{mainthm}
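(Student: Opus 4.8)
The plan is to confront, for each of the three forbidden classes, the Gauss and Codazzi equations with the explicit curvature tensor $\bar R$ of $g_a$ recorded in \cite{liefsoens2024}. Write $A$ for the shape operator of the unit normal $N$ and $h(X,Y)=g_a(AX,Y)$ for the second fundamental form; note that $\mathcal H$ is $5$-dimensional, leaving ample room for the test vectors used below. The key structural input is that $g_a$ is built from the almost product structure $P$ attached to the twistor fibration $\tau:\CP^3\to\sphere4$, so $\bar R$ at a point is determined once one fixes how the vectors involved sit relative to the splitting of $T\CP^3$ into the vertical distribution $\mathcal V$ (the tangent spaces of the $\sphere2$-fibres of $\tau$) and its horizontal complement $\mathcal V^{\perp}$, and relative to the (Kähler or nearly Kähler) almost complex structure. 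Since $\Iso(\CP^3,g_a)$ acts transitively and its isotropy acts on unit tangent vectors with only finitely many orbit types, it suffices to run each argument with $N$ in a short list of normal forms (essentially $N\in\mathcal V$, $N\in\mathcal V^{\perp}$, or $N$ at a fixed nonzero angle to $\mathcal V$). For the Fubini--Study value $a=1$ the three statements are classical facts about real hypersurfaces of $\CP^n$, $n\ge2$, so the genuinely new content lies in the range $a\neq1$.

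\textbf{Codazzi-like, parallel, totally geodesic.} Recall that $\mathcal H$ is Codazzi-like exactly when $(\nabla_XA)Y=(\nabla_YA)X$ for all tangent $X,Y$, which by the Codazzi equation of the immersion is equivalent to $\bigl(\bar R(X,Y)N\bigr)^{\top}=0$, i.e. $\bar R(X,Y)N\in\reals N$ for all $X,Y\in N^{\perp}$. I would rule this out by exhibiting, for $N$ in each normal form, vectors $X,Y,Z\in N^{\perp}$ with $g_a(\bar R(X,Y)N,Z)\neq0$; the natural move is to take $X,Y,Z$ adapted to the splitting $\mathcal V\oplus\mathcal V^{\perp}$ (and, where needed, to $J$), so that the precise terms of $\bar R$ carrying the anisotropy of $g_a$ contribute and cannot be cancelled by the Fubini--Study part. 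Since $\nabla A=0$ implies $A$ is Codazzi and $A=0$ implies $\nabla A=0$, this simultaneously excludes parallel and totally geodesic hypersurfaces.

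\textbf{Totally umbilical and constant sectional curvature.} If $A=\lambda\,\identity$ then Codazzi collapses to $\bigl(\bar R(X,Y)N\bigr)^{\top}=g_a(Y,\operatorname{grad}\lambda)\,X-g_a(X,\operatorname{grad}\lambda)\,Y$. Where $\operatorname{grad}\lambda$ vanishes, $\lambda$ is locally constant and $A$ is parallel, contradicting the previous case; elsewhere, evaluating this identity on $X,Y\perp\operatorname{grad}\lambda$ forces a large family of tangential components of $\bar R$ to vanish, while evaluating it on $X\perp\operatorname{grad}\lambda$ and $Y=\operatorname{grad}\lambda$ forces $g_a(\bar R(X,\operatorname{grad}\lambda)N,X)$ to be a fixed ($X$-independent) multiple of $|X|^{2}$ on a $3$-plane --- an over-determined condition that the explicit $\bar R$, tracked in each normal form for $N$ and now also recording the position of $\operatorname{grad}\lambda$, cannot meet. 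If instead $\mathcal H$ has constant sectional curvature $c$, pick an orthonormal frame $e_1,\dots,e_5$ of eigenvectors of $A$, $Ae_i=\kappa_ie_i$; the Gauss equation then yields $\bar R(e_i,e_j,e_i,e_k)=0$ for distinct indices $i,j,k$ and $\bar R(e_i,e_j,e_i,e_j)=c-\kappa_i\kappa_j$. The first batch forces the eigenframe into a rigid position relative to $\mathcal V\oplus\mathcal V^{\perp}$, after which either no admissible frame exists or the remaining $\binom{5}{2}$ scalar equations for the six unknowns $c,\kappa_1,\dots,\kappa_5$ are inconsistent.

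\textbf{Where the difficulty lies.} Conceptually nothing here is subtle --- it is all Gauss--Codazzi bookkeeping against a known curvature tensor. The real work, and the place where care is needed, is making the case analysis genuinely finite and uniform in $a$: one must pin down good normal forms for $N$ (and, in the umbilical case, for $\operatorname{grad}\lambda$) using the isometry group, and then choose the test vectors so cleverly that the $a$-dependent part of $\bar R$ --- the only part separating $g_a$ from a space form --- actually survives the relevant contraction rather than being annihilated. A secondary nuisance is that several of these contractions degenerate precisely at $a=1$, where the curvature is much more symmetric; there one falls back on the classical theory of real hypersurfaces in $\CP^3$.
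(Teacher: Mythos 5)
Your strategy for the Codazzi part is essentially the paper's (test the explicit $R^a$ against the Codazzi equation on adapted vectors), but the reduction you rest everything on is flawed: the isotropy of $\Iso(\CP^3,g_a)$ does \emph{not} act with finitely many orbit types on unit tangent vectors. The angle $\theta$ between $N$ and the twistor-vertical distribution $\Dtwo$ is a continuous invariant, and along a hypersurface it is in general a nonconstant function whose derivatives even enter the second fundamental form (in the frame of \Cref{thm:frame_non_horizontal} one has $\alpha(e_i,e_3)=e_i(\theta)+\tfrac12\delta_{i5}$). So the case list ``$N$ vertical, $N$ horizontal, $N$ at a fixed nonzero angle'' is not exhaustive; $\theta$ must be carried as a free parameter, which is exactly what the paper's adapted frames do, yielding for instance $g_a(R^a(e_1,e_4)e_5,N_a)\propto\cos\theta$ in the non-horizontal case and $g_a(R^a(e_3,e_4)e_5,N_a)=-\tfrac{1}{2\sqrt{2a}}$ in the horizontal one. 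For the pointwise Codazzi computation this is repairable. Your umbilical argument is a different route from the paper's (which uses the Gauss equation, not Codazzi, to force horizontality and then a contradiction); it might be completable, but it is only a sketch, and its first branch (``where $\operatorname{grad}\lambda$ vanishes, $\lambda$ is locally constant'') is incorrect as stated, since the zero set of $\operatorname{grad}\lambda$ need not be open --- you should instead work on the open set where $\operatorname{grad}\lambda\neq0$ and conclude $\lambda$ is globally constant.

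The genuine gap is the constant sectional curvature case. You use only the Gauss equation at a point plus an equation count ($\tbinom52$ equations for the six unknowns $c,\kappa_1,\dots,\kappa_5$), with the unquantified hope that the ``first batch'' pins the eigenframe rigidly; but the unknowns also include the continuous position of the $A$-eigenframe relative to $(\Jo,J,P)$ and the angle $\theta$, so the count establishes nothing, and you give no argument that the resulting algebraic system is inconsistent for every $a$. The paper's proof indicates that pointwise first-order data is not enough: it differentiates the Codazzi equation and invokes the Ricci identity to obtain a necessary condition involving $\nabla R^a$ and $\alpha$, solves this in the adapted frames to force the hypersurface to be totally umbilical for generic $a$ (then quotes the umbilical non-existence), treats $a=1/2$ separately with an extra Gauss equation, and for $a=1$ does not rely on ``classical facts'' about constant curvature alone but derives constant principal curvatures and then invokes Takagi's classification (type A1) to reach a contradiction. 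Without a second-order ingredient of this kind --- or at least a genuine joint use of Gauss \emph{and} Codazzi with the explicit curvature --- your CSC argument has no route to a contradiction and, as written, this step would fail.
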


\section{Preliminaries}

\subsection{Fundamental equations of hypersurfaces}\label{sec:hypersurfaces_general}

Suppose we have a hypersurface $\mathcal{H}$ of a Riemannian manifold $(M,g)$ with unit normal $N$. Let $R$ be the Riemann tensor of $(M,g)$, and $R^\mathcal{H}$ the curvature tensor of $\mathcal{H}$ with the pull-back metric coming from $g$. 

With respect to the hypersurface and the metric $g$, we have the second fundamental form $h$ as the part of the ambient Levi-Civita connection that is orthogonal to the hypersurface. Given the unit normal, we also have the shape-operator $A$, given by $g(h(X,Y), N) = g(A X, Y)$. Furthermore, we define $\alpha(X,Y) = g(h(X,Y),N)$. The Gauss and Codazzi equation read
\begin{subequations}
\begin{align}
    g(R^\mathcal{H}(X,Y)Z,W) &= g(R(X,Y)Z,W) + \alpha(X,W) \alpha(Y,Z) - \alpha(X,Z) \alpha(Y,W), \label{eq:Gauss}\\ 
    g( R(X,Y)Z , N) &= g( (\nabla^\perp h)(X,Y,Z) - (\nabla^\perp h)(Y, X,Z), N), \label{eq:Codazzi}
\end{align} 
\end{subequations} respectively, where $X,Y,Z,W$ are tangent vector fields of $\mathcal{H}$.

\subsection{Types of hypersurfaces}

We recall some standard definitions for different types of hypersurfaces of Riemannian manifolds $(M,g)$ and of Riemannian manifolds with an almost complex structure $(M, g, J)$. Suppose $N$ is the unit normal of the hypersurface and denote the identity component of the isometry group of $(M,g)$ by $\Iso(M,g)_\circ$.

The hypersurface is called Hopf when $J N$ is a principal direction, i.e. it is an eigenvector of the shape operator. Another way of thinking about this, is that the integral curve of $JN$ is totally geodesic. 

A submanifold is said to be extrinsically homogeneous if there is a closed subgroup $G$ of $\Iso(M,g)_\circ$ such that the submanifold is the orbit of a point under this subgroup. 

We say that a hypersurface is totally umbilical, when $g(h(X,Y),N)=g(\nabla_X Y,N)$ is proportional to $g(X,Y)$ for all tangent vectors $X,Y$. If the second fundamental form satisfies the condition that $\nabla h$ is a totally symmetric tensor, we say that the hypersurface is Codazzi or Codazzi-like. The Codazzi hypersurfaces include the parallel ($\nabla h = 0$) and totally geodesic ($h=0$) hypersurfaces. Minimal submanifolds are those with vanishing mean curvature, i.e. the trace of $h$ vanishes. Recall the standard notation $(X \wedge Y)Z = g(Y,Z)X - g(X,Z)Y$. Finally, a hypersurface is said to have constant sectional curvature if and only if $R^\mathcal{H}(X,Y)Z = c (X \wedge Y)Z$ for some constant $c$.

\subsection{(Nearly Kähler) \texorpdfstring{$\CP^3$}{ℂP³} via the Hopf fibration}\label{sec:description}

Recall the Hopf fibration $\pi: \sphere{7} \to \CP^3$, which is a Riemannian submersion when $\sphere{7}$ is equipped with its round metric and $\CP^3$ with its Fubini-Study metric $\FSm$. There is an almost complex structure $\Jo$ on $\CP^3$ inherited from multiplication by $i$ in $\complexs^4$. Moreover, recall that $(\CP^3, \FSm)$ is Einstein, has constant holomorphic sectional curvature $4$, and its isometry group is $\set{ \CP^3 \to \CP^3: \pi(x) \mapsto \pi(U x) \mid U \in \SU(4) }$.

Nearly Kähler $\CP^3$ can be described as the twistor space over $\sphere{4} \cong \quaternions P^1$, where $\quaternions$ are the quaternions. We denote the twistor fibration with $\tau$. In \Cite{liefsoens2024}, a more concrete description is given of nearly Kähler $\CP^3$, and of $\CP^3$ with any homogeneous metric. There, the seven-sphere is embedded in $\quaternions^2$, and the following distributions are considered: $V(p) = ip$, $\lift{\Dtwo}(p) = \Span\set{ j p, k p }$ and $\lift{\Dfour} = (V \oplus \lift{\Dtwo})^{\perp} \subset T\sphere{7}$. It is proven that $\Dtwo = \dd \pi \lift{\Dtwo}$ and $\Dfour = \dd \pi \lift{\Dfour}$ are well defined and orthogonal for the Fubini-Study metric on $\CP^3$. Moreover, $\Dtwo$ is integrable and its leafs are the fibers of the twistor fibration. 

An almost product structure $P$ was defined in \Cite{liefsoens2024} by requiring that it acts as the identity on $\Dfour$ and as minus the identity on $\Dtwo$, and by linearity.
From the almost product structure, a new almost complex structure is defined as $J = P \Jo = \Jo P$. By taking the metric 
\[ g(X,Y) = \frac{3}{2} \FSm(X,Y)+ \frac{1}{2} \FSm(X,P Y), \] the space $(\CP^3, g, J)$ is a nearly Kähler manifold, i.e. $G(X,Y) = (\nabla J)(X,Y)$ is skew-symmetric. The metrics $g_a$, given by
\[ g_a(X,Y) = \frac{1+a}{2} \FSm(X,Y) + \frac{a-1}{2} \FSm(P X,Y), \]
exhaust all the homogeneous metrics on $\CP^3$ (up to global rescaling), see \cite{onishchik1963,volper1999}. Note that for $a=2$, we have the unique metric such that $(\CP^3, g_a, J)$ is nearly Kähler, i.e. $g_2 = g$; and that for $a=1$, we have the Fubini-Study metric $\CP^3$. 

The Levi-Civita connections $\FSnabla$, $\nabla$, $\nabla^a$ of $\FSm$, $g$, and $g_a$ are linked as $\nabla_X Y = \FSnabla_X Y + D(X,Y)$ and $\nabla^a_X Y = \FSnabla_X Y + D^a(X,Y)$, where $D$ and $D^a$ are given by 
\begin{align*}
    D(X,Y) &= \frac{1}{2} \mathcal{P}_2 G( \Jo X, Y), &
    D^a(X,Y) &= \frac{a-1}{a} \mathcal{P}_2 G( \Jo X, Y),
\end{align*}
and $2 \mathcal{P}_2 = \identity +P$. Moreover, for all $a\neq 1$, the isometries of $(\CP^3, g_a)$ have been characterised in \Cite{liefsoens2024} as those Kähler isometries that commute with $P$. Finally, the following expression for the Riemann curvature was given for $(\CP^3, g_a)$:
\begingroup
\allowdisplaybreaks
\begin{align*}
    R^a(X,Y)Z 
    &= \frac{(a-1)(a+2)}{a^2} (X \wedge_a Y)Z
    \\
    &\quad
    + \frac{1}{a}\bigg( (X \wedge_a Y)Z + (\Jo X \wedge_a \Jo Y)Z + 2g_a(X,\Jo  Y) \Jo  Z \bigg) \\
    &\quad
    + \frac{1-a}{a^2}\bigg( (X \wedge_a Y)Z  + (J X \wedge_a J Y)Z + 2 g_a(X,JY)JZ \bigg) \\
    &\quad
    + \frac{1-a}{a}\bigg( (X \wedge_a Y)P Z + P (X \wedge_a Y)Z  - \frac{a+2}{a}P( X \wedge_a Y)P Z \bigg)  .
\end{align*}
\endgroup

Finally, we recall some useful equations that hold for the different structures on the nearly Kähler $\CP^3$. First, we have that it is of constant type, with the constant given by unity with the choices made in this work. We then have
\begin{subequations}
\begin{align}
    \norm{\nabla J(X,Y)}^2 &= \norm{X}^2\norm{Y}^2 - g(X,Y)^2 - g(X, JY)^2 \\ 
    \nabla G(X, Y,Z) &= J (Y\wedge Z)X  - g(JY,Z)X  \\
    G(X,G(Y,Z)) &= (Y\wedge Z)X  + J (Y\wedge Z)J  \\ 
    g(G(X,Y),G(Z,W)) &= g\l( (Z\wedge W)Y, X\r) + g\l( J(Z\wedge W)JY, X\r).
 \end{align} 
 \end{subequations}
Second, we have the covariant derivative of the Kähler almost complex structure
\begin{align}
    (\nabla \Jo)(X,Y) &= G(\mathcal{P}_1 X, \mathcal{P}_2 Y).
\end{align}
Finally, we remark that $(\nabla^a J)$ can be written as the sum of $G = (\nabla J)$ (which is skew-symmetric) and a symmetric tensor as follows
\[ (\nabla^a J)(X,Y) = G^a(X,Y) = G(X,Y) + \frac{2-a}{a} \mathcal{P}_2 G(\Jo X, Y). \]
Note that for $a=2$ (and only then), the symmetric part vanishes, so that indeed $(\CP^3, g_2, J)$ is nearly Kähler.

\subsection{Complex almost contact structure on \texorpdfstring{$\CP^3$}{ℂP³}}\label{sec:contact_structure}

Recall that $\CP^3$ is endowed with a complex almost contact structure. Let $U \in \Dtwo$, $V = - \Jo U$ and $X,Y$ be arbitrary vector fields. Let $\sigma(X) = \FSm( \FSnabla_X U, V)$; $u(X) = \FSm(X,U)$ and $v(X) = \FSm(X,V)$. In \Cite{blair2000}, they prove the existence of a (local) structure $\Phi$, and associated to it $\Psi = \Jo \Phi$, so that the following properties hold for all vector fields $X,Y$:
\begin{subequations} 
    \begin{gather}
        \Phi U = \Phi V = 0  \hspace{3cm}
        \Phi^2  = \Psi^2 = -\identity + u \otimes U + v \otimes V \label{eq:psi_only_on_D24_and_square_psi}  \\ 
        \FSm(X,\Phi Y) = - \FSm(\Phi X,Y) \hspace{3cm}
        \Phi \Jo = -\Jo \Phi  \label{eq:psi_antiCommutes_JO} \\
        \FSnabla_X U = -\Psi X + \sigma(X) V \hspace{3cm}
        \FSnabla_X V = -\Phi X - \sigma(X) U \label{eq:nabla0_UV} \\ 
        (\FSnabla_X \Psi) Y = \FSm(X,Y) U - u(Y)X - \FSm(X,J Y)V-v(Y)JX + \sigma(X)\Phi Y \label{eq:nabla0_psi} \\
        (\FSnabla_X \Phi) Y = \FSm(X,Y) V - v(Y)X - \FSm(X,J Y)U+u(Y)JX - \sigma(X)\Psi Y \label{eq:nabla0_phi}.
    \end{gather}
\end{subequations}
Moreover, in \Cite{blair2000} it is also discussed how these structures change when changing charts. The structures $(u,v,\Phi, \Psi)$ are referred to as the complex almost contact structure of $\CP^3$. 

An alternative way of seeing this complex almost contact structure of $\CP^3$, is to use the skew-symmetric tensor $G = \nabla J$ of nearly Kähler $\CP^3$. 
Let $A$ be an unit-length vector field in $\mathcal{D}^2$. For all $(\CP^3,g_a)$ with $a \neq 1$, define
\begin{align*}
    \Phi_A X &\coloneqq JG(A, X) &
    \Psi_A X &\coloneqq J \Phi_{A} X = \Phi_{-JA} X =  - G(A, X).
\end{align*}
For each such $A$, the structures $\Phi_A$ and $\Psi_A$ behave as complex almost contact structures.

\section{Angle function for hypersurfaces}\label{sec:angle}

Let $\mathcal{H}$ be a hypersurface of $(\CP^3, g_a)$ ($a\neq 1$) with unit normal $N_a$. Define the angle function $\theta_a$ by 
\[ \cos(2 \theta_a) = g_a(JN_a, \Jo N_a) = g_a(N_a, PN_a), \]
where at any one point, $\theta_a$ can be chosen to lie between $0$ and $\pi/2$.
Since for all $a\neq 1$, every isometry of $(\CP^3,g_a)$ preserves the almost product structure $P$, this angle function is an invariant of a hypersurface in $(\CP^3,g_a)$. This allows to introduce the following terminology.
\begin{definition}
    Let $a \neq 1$. A hypersurface of $(\CP^3,g_a)$ is called
    \begin{itemize}
        \item \textit{horizontal}, if $\theta_a$ is identically zero; 
        \item \textit{vertical}, if $\theta_a$ is identically $\pi/2$;
        \item $g_a$-\textit{isotropic}, if $\theta_a$ is identically $\pi/4$.
    \end{itemize}
\end{definition}
\begin{remark}
    Note that if a hypersurface is horizontal for $(\CP^3,g_a)$, it is automatically horizontal for $(\CP^3,g_b)$, for any $a, b \neq 1$. The same statement holds when horizontal is replaced by vertical. 
\end{remark}

We have the following immediate characterisations for these notions.
\begin{proposition}\label{thm:characterisation_special_values}
    Let $\mathcal{H}$ be a hypersurface in $(\CP^3,g_a)$ ($a\neq 1$) with unit normal $N_a$. Then,
    \begin{itemize}
        \item $\mathcal{H}$ is horizontal if and only if $N_a \in \Dfour$;
        \item $\mathcal{H}$ is vertical if and only if $N_a \in \Dtwo$;
        \item $\mathcal{H}$ is $g_a$-isotropic if and only if $N_a, JN_a, \Jo N_a, PN_a$ are two-by-two $g_a$-orthogonal.
    \end{itemize}
\end{proposition}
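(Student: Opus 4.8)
The plan is to reduce the whole statement to the $\FSm$-orthogonal splitting $T\CP^3 = \Dfour\oplus\Dtwo$ together with a few compatibility properties of $P$, $\Jo$ and $J$ with $g_a$. First I would record the facts I use, all immediate from \Cref{sec:description}: since $P$ is the difference of the $\FSm$-orthogonal projections onto $\Dfour$ and $\Dtwo$, it is a $g_a$-symmetric involution; the restriction of $g_a$ to $\Dfour$ equals $a\,\FSm$ and to $\Dtwo$ equals $\FSm$, while $\Dfour\perp_{g_a}\Dtwo$; using $\FSm$-orthogonality of $\Jo$ and the relation $J=P\Jo=\Jo P$ one finds that $\Jo$ and $J=P\Jo$ are both $g_a$-orthogonal, hence (being square $-\identity$) $g_a$-skew, and that $P$ itself is $g_a$-orthogonal; and finally $JP=\Jo$ and $\Jo P=J$.

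For the horizontal and vertical cases, write the unit normal as $N_a=N'+N''$ with $N'\in\Dfour$, $N''\in\Dtwo$, so that $PN_a=N'-N''$. Using $\Dfour\perp_{g_a}\Dtwo$ and $\lVert N'\rVert_a^2+\lVert N''\rVert_a^2=1$,
\[ \cos(2\theta_a)=g_a(N_a,PN_a)=\lVert N'\rVert_a^2-\lVert N''\rVert_a^2=2\lVert N'\rVert_a^2-1=1-2\lVert N''\rVert_a^2 . \]
Hence, at each point, $\cos(2\theta_a)=1$ iff $N''=0$ iff $N_a\in\Dfour$, and $\cos(2\theta_a)=-1$ iff $N'=0$ iff $N_a\in\Dtwo$; since $\theta_a\in[0,\pi/2]$, this is exactly the horizontal, resp.\ vertical, condition (pointwise, hence on all of $\mathcal H$).

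For the $g_a$-isotropic case the point is that most pairwise $g_a$-products among $N_a,JN_a,\Jo N_a,PN_a$ vanish identically: $g_a$-skewness of $J$ and $\Jo$ gives $g_a(N_a,JN_a)=g_a(N_a,\Jo N_a)=0$, and combining skewness with $JP=\Jo$, $\Jo P=J$ gives $g_a(JN_a,PN_a)=-g_a(N_a,JPN_a)=-g_a(N_a,\Jo N_a)=0$ and likewise $g_a(\Jo N_a,PN_a)=0$. The only remaining products are $g_a(N_a,PN_a)$ and $g_a(JN_a,\Jo N_a)$, and both equal $\cos(2\theta_a)$ by the definition of $\theta_a$. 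Thus the four vectors are two-by-two $g_a$-orthogonal exactly when $\cos(2\theta_a)=0$, i.e.\ $\theta_a\equiv\pi/4$; and in that case they are $g_a$-unit (as $P,J,\Jo$ are $g_a$-orthogonal) and pairwise orthogonal, hence automatically linearly independent, so they do form a genuine $g_a$-orthonormal quadruple.

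I do not expect a real obstacle here — the whole statement is linear algebra on the $P$-eigenspace decomposition. The only steps requiring a little care are transferring the $\FSm$-compatibility of $P$, $\Jo$, $J$ to $g_a$ (handled by $g_a|_{\Dfour}=a\,\FSm|_{\Dfour}$, $g_a|_{\Dtwo}=\FSm|_{\Dtwo}$ and $\Dfour\perp_{g_a}\Dtwo$), and the observation that in the isotropic case the six pairwise orthogonality conditions collapse to the single scalar equation $\cos(2\theta_a)=0$.
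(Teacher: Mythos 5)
Your argument is correct, and it is precisely the elementary verification the paper treats as immediate (the paper states \cref{thm:characterisation_special_values} without proof): splitting $N_a$ along the $P$-eigenspace decomposition gives $\cos(2\theta_a)=\lVert N'\rVert_a^2-\lVert N''\rVert_a^2$ for the first two items, and the $g_a$-skewness of $J,\Jo$ together with $JP=\Jo$, $\Jo P=J$ reduces the six orthogonality conditions to $\cos(2\theta_a)=0$ for the third. The auxiliary facts you invoke ($\Dtwo\perp_{g_a}\Dfour$, $g_a$-compatibility of $P,J,\Jo$) all follow correctly from the definition of $g_a$ and $J=P\Jo=\Jo P$, so there is no gap.
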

The terminology for horizontal and vertical hypersurfaces comes from this characterisation: a hypersurface is horizontal (vertical), if the normal is horizontal (vertical) for the twistor fibration $\tau$.

We immediately have the following general result.
\begin{proposition}\label{thm:no_vertical_hypersurfaces}
    No hypersurface of $(\CP^3,g_a)$ can be vertical.
\end{proposition}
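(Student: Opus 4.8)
The plan is to reduce, via \Cref{thm:characterisation_special_values}, to the statement that the distribution $\Dtwo$ cannot contain the unit normal of a hypersurface, and then to exploit that its $g_a$-orthogonal complement $\Dfour$ is \emph{fat}: the $\Dtwo$-valued integrability tensor $(X,Y)\mapsto\bigl(\text{$\Dtwo$-component of }[X,Y]\bigr)$ on $\Dfour$ is pointwise surjective onto $\Dtwo$. I would start by recording two facts that are immediate from \Cref{sec:description}: that $\Dfour$ and $\Dtwo$ are $g_a$-orthogonal (substitute $P|_{\Dfour}=\identity$ and $P|_{\Dtwo}=-\identity$ into the formula for $g_a$ and use $\Dfour\perp_{\FSm}\Dtwo$), and consequently that, were $\mathcal H$ vertical (so that $N_a\in\Dtwo$ along $\mathcal H$ by \Cref{thm:characterisation_special_values}), the distribution $\Dfour$ would be contained in $T\mathcal H$ at every point of $\mathcal H$.

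Since the tangent distribution of a submanifold is involutive, $[X,Y]$ would then be tangent to $\mathcal H$ along $\mathcal H$ for all local sections $X,Y$ of $\Dfour$; equivalently, using $\Dfour\perp_{g_a}\Dtwo$, the $\Dtwo$-component of $[X,Y]$ would be $g_a$-orthogonal to $N_a$ there. I would then compute this component. Fix a local $\FSm$-unit section $U$ of $\Dtwo$ and put $V=-\Jo U$, an $\FSm$-unit section $\FSm$-orthogonal to $U$, so that $\{U,V\}$ frames $\Dtwo$; writing $[X,Y]=\FSnabla_XY-\FSnabla_YX$ and substituting \eqref{eq:nabla0_UV}, the $\sigma$-terms drop out because $\Dfour\perp_{\FSm}\Dtwo$, and using the skew-symmetry in \eqref{eq:psi_antiCommutes_JO} together with the Kähler property of $\FSm$ one arrives at
\begin{align*}
	\FSm([X,Y],U) &= 2\,\FSm(\Psi X,Y), & \FSm([X,Y],V) &= 2\,\FSm(\Phi X,Y),
\end{align*}
valid for all local sections $X,Y$ of $\Dfour$.

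To finish, fix $p\in\mathcal H$ and a local section $X$ of $\Dfour$ with $X(p)\neq0$. By \eqref{eq:psi_only_on_D24_and_square_psi} and \eqref{eq:psi_antiCommutes_JO}, both $\Phi X$ and $\Psi X=\Jo\Phi X$ are sections of $\Dfour$, and neither vanishes at $p$ since $\Phi^2X=\Psi^2X=-X$. Evaluating the two formulas with $Y=\Psi X$ shows the $\Dtwo$-component of $[X,\Psi X]$ equals $2\,\FSm(\Psi X,\Psi X)\,U$ at $p$ (the $V$-coefficient $\FSm(\Phi X,\Psi X)=\FSm(\Phi X,\Jo\Phi X)$ vanishes), while with $Y=\Phi X$ the $\Dtwo$-component of $[X,\Phi X]$ is a nonzero multiple of $V$ at $p$. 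Hence the $\Dtwo$-components of such brackets span $\Dtwo(p)$. But along a vertical $\mathcal H$ they are all $g_a$-orthogonal to $N_a$, so $N_a(p)$ is $g_a$-orthogonal to $\Dtwo(p)$; since $N_a(p)\in\Dtwo(p)$ this forces $g_a(N_a,N_a)=0$, contradicting $g_a(N_a,N_a)=1$.

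The only genuinely delicate point is the need for the integrability tensor of $\Dfour$ to have \emph{full} image $\Dtwo$, not merely to be nonzero: if its image were one-dimensional, a vertical hypersurface whose normal spanned the complementary line would not be excluded, so non-integrability of $\Dfour$ by itself is not enough. The two test fields $Y=\Psi X$ and $Y=\Phi X$ are exactly what yields surjectivity. Everything else is routine, and since the argument is entirely pointwise it is harmless that $U$, $\Phi$, $\Psi$ are only locally defined. (Alternatively, one could upgrade ``nonzero'' to ``surjective'' abstractly, invoking homogeneity and the irreducibility of the isotropy representation on $\Dtwo$, but the direct computation is shorter.)
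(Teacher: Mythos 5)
Your proof is correct and follows essentially the same route as the paper's: assuming a vertical normal $N_a\in\Dtwo$, you derive a contradiction with Frobenius by showing that brackets of sections of $\Dfour$ (which would be tangent to $\mathcal H$) have a $\Dtwo$-component that cannot be $g_a$-orthogonal to the normal. The only differences are cosmetic: you compute with Blair's identities \eqref{eq:nabla0_UV} for the Fubini--Study connection directly in every $(\CP^3,g_a)$ and obtain surjectivity of the integrability tensor from the two brackets $[X,\Psi X]$ and $[X,\Phi X]$, whereas the paper reduces to the nearly Kähler metric and uses the single bracket with $Y=JG(X,N)$ adapted to the given normal; both versions are complete.
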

\begin{proof}
    It is sufficient to prove this statement for the nearly Kähler $(\CP^3,g_2,J) = (\CP^3, g,J)$. 
    Take any unit $g$-length vector field $N \in \Dtwo$. Note that then also $JN \in \Dtwo$. Take any $X \in \Dfour$. Let $Y = JG(X,N)$ and note that $Y$ is $g$-orthogonal to $X, JX, N, JN$. In particular, $Y \in \Dfour$. We can identify $N$ with $U$; $X$ with $\chi$ and $Y$ with $-\Psi X$ in the contact frame of \Cite{liefsoens2024}. Then, we see that $g( [X,Y],N ) = -1$. This means that $\mathcal{D} = \set{N}^{\perp_g}$ is never involutive, so that $N$ cannot be the normal to a hypersurface.
\end{proof}

We proceed by giving orthonormal frames for both horizontal and non-horizontal hypersurfaces in the nearly Kähler $\CP^3$.
\begin{lemma}\label{thm:frame_horizontal}
    Suppose $\mathcal{H}$ is a horizontal hypersurface of nearly Kähler $\CP^3$ with unit normal $N$. Let $G = \nabla J$, and $\Phi, \Psi = J \Phi $ be the complex almost contact structures of $\CP^3$. Then, the frame $\set{e_1, \hdots, e_5}$ defined by
    \begin{align}
        e_1 &= -G(\Phi N,N) & e_2 &= J e_1 & e_3 &= JN &  e_4 &= \Phi N &  e_5 &= \Psi N
    \end{align}
    forms a $g$-orthonormal frame of $\mathcal{H}$. Moreover, $e_1, e_2 \in \Dtwo$ and $e_3, e_4, e_5 \in \Dfour$. 
\end{lemma}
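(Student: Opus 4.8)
The plan is to push the whole statement down to linear algebra on the two distributions $\Dtwo$ and $\Dfour$ at each point of $\mathcal{H}$. First I would assemble the relevant facts about how the structures respect this splitting. Since $P^2=\identity$ and $\Jo$ commutes with $P$ (because $J=P\Jo=\Jo P$), the map $J$ also commutes with $P$, so $J$ and $\Jo$ preserve both $\Dtwo$ and $\Dfour$, with $J=\Jo$ on $\Dfour$ and $J=-\Jo$ on $\Dtwo$. As $\Phi$ annihilates $\Dtwo$ and is $\FSm$-skew, it maps $\Dfour$ into $\Dfour$; moreover $\Phi^2=-\identity$ on $\Dfour$, and $\Phi\Jo=-\Jo\Phi$ forces $\Phi$ and $J$ to anticommute on $\Dfour$. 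From $g(X,Y)=\tfrac32\FSm(X,Y)+\tfrac12\FSm(X,PY)$ one gets $g=2\FSm$ on $\Dfour$, $g=\FSm$ on $\Dtwo$, and $\Dtwo\perp_g\Dfour$; hence on $\Dfour$ the maps $J$ and $\Phi$ are $g$-orthogonal anticommuting almost complex structures, $\Psi=J\Phi$ there, and $\Phi,\Psi$ annihilate $\Dtwo$. Finally I need the structural input that $G=\nabla J$ maps $\Dfour\times\Dfour$ into $\Dtwo$; I would either quote this from \cite{liefsoens2024} or prove it on the spot: the $3$-form $(X,Y,Z)\mapsto g(G(X,Y),Z)$ is totally antisymmetric and satisfies $G(JX,Y)=-JG(X,Y)$, so on a $g$-orthonormal $J$-adapted basis $\{e,Je,f,Jf\}$ of $\Dfour$ every component involves a $J$-pair in two of its slots and hence vanishes, giving $g(G(X,Y),Z)=0$ for all $X,Y,Z\in\Dfour$.

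Now assume $\mathcal{H}$ is horizontal, so $N\in\Dfour$ by \cref{thm:characterisation_special_values}. Then $JN,\Phi N,\Psi N\in\Dfour$, while $\Phi N,N\in\Dfour$ gives $e_1=-G(\Phi N,N)\in\Dtwo$ and therefore $e_2=Je_1\in\Dtwo$; this already proves the distribution claims. For the $\Dfour$-vectors: $J$ and $\Phi$ being $g$-skew on $\Dfour$ give $g(N,JN)=g(N,\Phi N)=g(\Phi N,J\Phi N)=0$, and combining $g$-skewness of $\Phi$ with $\Phi J=-J\Phi$ on $\Dfour$ yields $g(JN,\Phi N)=-g(JN,\Phi N)$, i.e. $g(JN,\Phi N)=0$; then $g(JN,J\Phi N)=g(N,\Phi N)=0$ and $g(N,J\Phi N)=-g(JN,\Phi N)=0$. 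All norms equal $1$ since $g(\Phi N,\Phi N)=-g(\Phi^2N,N)=g(N,N)=1$ and $J$ is a $g$-isometry. Hence $\{N,JN,\Phi N,J\Phi N\}=\{N,e_3,e_4,e_5\}$ is a $g$-orthonormal basis of the four-dimensional $\Dfour$; in particular $e_3,e_4,e_5$ are $g$-orthonormal and $g$-orthogonal to $N$.

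For the $\Dtwo$-vectors it suffices, since $\Dtwo$ is two-dimensional and $J$ is a $g$-orthogonal complex structure on it, to check that $\norm{e_1}_g=1$. Applying the constant-type identity $\norm{\nabla J(X,Y)}^2=\norm{X}^2\norm{Y}^2-g(X,Y)^2-g(X,JY)^2$ with $X=\Phi N$, $Y=N$, and using $\norm{\Phi N}=\norm{N}=1$, $g(\Phi N,N)=0$, $g(\Phi N,JN)=0$ from the previous step, gives $\norm{e_1}^2=1$; then $e_2=Je_1$ is automatically a $g$-unit vector $g$-orthogonal to $e_1$. Finally $e_1,e_2\in\Dtwo$ and $N,e_3,e_4,e_5\in\Dfour$ with $\Dtwo\perp_g\Dfour$, so $g(e_i,e_j)=\delta_{ij}$ across the two blocks and $g(e_i,N)=0$ for $i=1,2$. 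Thus $\{e_1,\dots,e_5\}$ is a $g$-orthonormal set inside $N^{\perp_g}=T\mathcal{H}$, which is five-dimensional, so it is a frame of $\mathcal{H}$.

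I expect the main obstacle to be bookkeeping rather than a deep point: keeping the two metrics $g$ and $\FSm$ (and the factor $2$ between them on $\Dfour$) straight, translating Blair's $\FSm$-contact identities of \cref{sec:contact_structure} into $g$-statements on $\Dfour$, and establishing the one genuinely structural fact $G(\Dfour,\Dfour)\subseteq\Dtwo$. Once these are in place, the orthonormality is a short direct computation driven by the constant-type identity and the skew-symmetry of $J$ and $\Phi$.
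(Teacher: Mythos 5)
Your argument is correct and amounts to exactly the pointwise verification the paper leaves implicit (the lemma is stated without proof, such frame checks being treated as a simple exercise using the contact identities of \cref{sec:contact_structure} and the constant-type relation). The one genuinely structural input, $g(G(X,Y),Z)=0$ for $X,Y,Z\in\Dfour$ (hence $e_1=-G(\Phi N,N)\in\Dtwo$), is established soundly by your total-skewness/$J$-adapted-basis argument, and the remaining orthonormality and tangency computations all go through.
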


\begin{lemma}\label{thm:frame_non_horizontal}
    Suppose $\mathcal{H}$ is a non-horizontal hypersurface of nearly Kähler $\CP^3$ with unit normal $N$. Let $G = \nabla J$, $P = -J \Jo$ be the almost product structure and $\mathcal{P}_1 = (\identity-P)/2$, $\mathcal{P}_2 = (\identity+P)/2$ the projections to $\Dtwo$ and $\Dfour$, respectively. Then, the frame $\set{e_1, \hdots, e_5}$ defined by
    \begingroup
    \allowdisplaybreaks
    \begin{align*}
        e_1 &= \frac{JN + \Jo N}{2 \cos \theta} = \frac{1}{\cos\theta} \mathcal{P}_2 J N &
        e_2 &= \frac{JN - \Jo N}{2 \sin \theta} = \frac{1}{\sin\theta} \mathcal{P}_1 J N \\
        e_3 &= \frac{PN - \cos(2\theta) N}{\sin(2\theta)} \\
        e_4 &= \frac{ G(PN, N) }{\sin(2\theta)} = \frac{ G(e_3, N) }{\sin(2\theta)} &
        e_5 &= J e_4 =  - \frac{ G(\Jo N, N) }{\sin(2\theta)}
    \end{align*}
    \endgroup
    forms a $g$-orthonormal frame of $\mathcal{H}$. 

    Moreover, the derivatives of $\theta$ are given by $\alpha(e_i, e_3) = e_i(\theta) + \delta_{i 5} \frac{1}{2}$, where $\alpha(x,y)=g(h(x,y),N)$ and $h$ is the second fundamental form.
\end{lemma}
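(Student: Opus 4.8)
The plan is to reparametrise the unit normal along the $\Dtwo$--$\Dfour$ splitting, which makes all five frame vectors nearly explicit; checking $g$-orthonormality then reduces to a few lines, and the identity for $\theta$ drops out of differentiating its definition. Everything takes place on the open set where $\theta\in(0,\pi/2)$, on which the displayed frame is defined; this set is nonempty because $\mathcal{H}$ is not horizontal and, by \cref{thm:no_vertical_hypersurfaces}, not vertical. Since $J=P\Jo=\Jo P$ and $P^2=\identity$, the structures $J$, $\Jo$, $P$ pairwise commute, so $J$ and $\Jo$ preserve the eigendistributions $\Dtwo$, $\Dfour$ of $P$; explicitly $\Jo=-J$ on $\Dtwo$, $\Jo=J$ on $\Dfour$, and $J\Jo=\Jo J=-P$. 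From antisymmetry of $G$ together with the nearly Kähler identity $G(X,JY)=-JG(X,Y)$ one obtains $G(\Dtwo,\Dtwo)=0$. Finally, $\cos(2\theta)=g(N,PN)$ forces $\norm{\mathcal{P}_1N}^2=\sin^2\theta$ and $\norm{\mathcal{P}_2N}^2=\cos^2\theta$, so one may write $N=\sin\theta\,\hat{N}_1+\cos\theta\,\hat{N}_2$ with $\hat{N}_1\in\Dtwo$, $\hat{N}_2\in\Dfour$ of unit length.

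Inserting this and using $\mathcal{P}_2 J\hat{N}_1=0=\mathcal{P}_1 J\hat{N}_2$, $\Jo\hat{N}_1=-J\hat{N}_1$ and $\Jo\hat{N}_2=J\hat{N}_2$, all the displayed expressions collapse to
\[
 e_1=J\hat{N}_2,\qquad e_2=J\hat{N}_1,\qquad e_3=-\cos\theta\,\hat{N}_1+\sin\theta\,\hat{N}_2,\qquad e_4=G(e_3,N)=-G(\hat{N}_1,\hat{N}_2),\qquad e_5=Je_4,
\]
where the identifications of the alternative forms of $e_1,e_2,e_4,e_5$ are just bilinearity of $G$ and $G(N,N)=0$. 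Orthonormality and tangency are then immediate: $\hat{N}_1\in\Dtwo$ is orthogonal to $\hat{N}_2,J\hat{N}_2\in\Dfour$; the triple $e_1,e_2,e_3$ is $g$-orthogonal to $N$ and to itself since $\Dtwo\perp\Dfour$ and $g(J\hat{N}_i,\hat{N}_i)=0$; and $e_4,e_5$ are $g$-orthogonal to $e_1,e_2,e_3,N$ using antisymmetry of $G$, the identity $G(X,JY)=-JG(X,Y)$ and $g$-orthogonality of $J$ (for instance $g(G(\hat{N}_1,\hat{N}_2),J\hat{N}_2)=-g(G(\hat{N}_1,J\hat{N}_2),\hat{N}_2)=g(JG(\hat{N}_1,\hat{N}_2),\hat{N}_2)=-g(G(\hat{N}_1,\hat{N}_2),J\hat{N}_2)=0$). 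The only non-obvious norm is $\norm{e_4}$, for which the constant-type identity gives $\norm{G(\hat{N}_1,\hat{N}_2)}^2=\norm{\hat{N}_1}^2\norm{\hat{N}_2}^2-g(\hat{N}_1,\hat{N}_2)^2-g(\hat{N}_1,J\hat{N}_2)^2=1$. Hence $\set{e_1,\dots,e_5}$ is a $g$-orthonormal basis of $N^{\perp}=T\mathcal{H}$.

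For the derivative formula I would differentiate $\cos(2\theta)=g(JN,\Jo N)$ along a tangent field $X$, using $\nabla J=G$, $(\nabla\Jo)(X,Y)=G(\mathcal{P}_1X,\mathcal{P}_2Y)$ and $J\Jo=\Jo J=-P$; this gives
\[
 -2\sin(2\theta)\,X(\theta)=g\bigl(G(X,N),\Jo N\bigr)+2\,g(\nabla_X N,PN)+g\bigl(JN,G(\mathcal{P}_1X,\mathcal{P}_2N)\bigr).
\]
The last term vanishes: since $\mathcal{P}_2N=\cos\theta\,\hat{N}_2$ and $\mathcal{P}_1X\in\Dtwo$, one reduces to the terms $g(J\hat{N}_i,G(\mathcal{P}_1X,\hat{N}_2))$, which vanish by $G(\Dtwo,\Dtwo)=0$ (for $i=1$) and by the same antisymmetry argument as above (for $i=2$). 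The first term equals $-g(X,G(\Jo N,N))=\sin(2\theta)\,g(X,e_5)$ by antisymmetry of $G$ and the identity $G(\Jo N,N)=-\sin(2\theta)\,e_5$ already established. For the middle term, $PN=\cos(2\theta)\,N+\sin(2\theta)\,e_3$ together with $g(\nabla_X N,N)=0$ and $g(\nabla_X N,e_3)=-\alpha(X,e_3)$ gives $g(\nabla_X N,PN)=-\sin(2\theta)\,\alpha(X,e_3)$. Dividing by $-2\sin(2\theta)$ yields $\alpha(X,e_3)=X(\theta)+\tfrac12 g(X,e_5)$, and taking $X=e_i$ with $g(e_i,e_5)=\delta_{i5}$ gives the claimed $\alpha(e_i,e_3)=e_i(\theta)+\tfrac12\delta_{i5}$.

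The main obstacle is the last computation: once $N=\sin\theta\,\hat{N}_1+\cos\theta\,\hat{N}_2$ is in place, the first two paragraphs are essentially bookkeeping, but seeing that the two algebraic cross-terms collapse to exactly $\sin(2\theta)\,g(X,e_5)$ and $0$ requires combining antisymmetry of $G$, the identity $G(X,JY)=-JG(X,Y)$ and the $\Dtwo$--$\Dfour$ decomposition with some care; the coefficient $\tfrac12$, and especially its sign, is where an error would most easily creep in.
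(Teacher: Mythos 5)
Your proposal is correct and proceeds essentially as the paper does: the paper leaves orthonormality as a direct verification and gets the derivative identity from the known covariant derivatives of the structures, which is exactly what you carry out, only organised through the decomposition $N=\sin\theta\,\hat N_1+\cos\theta\,\hat N_2$ and the differentiation of $\cos(2\theta)=g(JN,\Jo N)$ via $\nabla J=G$ and $\nabla\Jo$, arriving at the correct sign for the $\tfrac12\delta_{i5}$ term. Incidentally, your normalisation $e_4=G(e_3,N)$ (rather than $G(e_3,N)/\sin(2\theta)$) is the right reading of the statement, since $G(PN,N)=\sin(2\theta)\,G(e_3,N)$.
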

\begin{proof}
    It is a simple exercise to show that the frame fields are orthonormal and tangent to $\mathcal{H}$. Since we know $\nabla \Jo$ and $G = \nabla J$ (and hence $\nabla P = - \nabla (J \Jo)$); and by imposing that nearly Kähler $\CP^3$ is of constant type, we know that $\nabla G(X,Y,Z) = g(Z,X)JY - g(Y,X)JZ -g(JY,Z)X$, from which the conditions on the derivatives of $\theta$ follow. 
\end{proof}
\begin{remark}\label{remark:orthogonality_for_all_ga}
    The frames of \Cref{thm:frame_horizontal} and \Cref{thm:frame_non_horizontal} are also orthogonal for all $g_a$. 
\end{remark}

With these frames, we can characterise a horizontal hypersurface in the following ways.
\begin{proposition}\label{thm:characterisation_horizontal}
    Suppose $\mathcal{H}$ is a hypersurface of nearly Kähler $\CP^3$. Then the following are equivalent
    \begin{itemize}
        \item $\mathcal{H}$ is horizontal,
        \item the distribution $\Span\set{N, JN, \Jo N, PN}$ is two dimensional,
        \item $\mathcal{H}$ is $P$-invariant, i.e. the distribution $\Span\set{JN}\cap T \mathcal{H}$ is preserved by $P$,
        \item $\mathcal{H}$ is the pre-image of a hypersurface in $\sphere{4}$.
    \end{itemize}
\end{proposition}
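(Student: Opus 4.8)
The plan is to prove the chain of equivalences by connecting each statement to the behaviour of the normal $N$ under the almost product structure $P$, using the angle function $\theta$ of \Cref{sec:angle} as the central bookkeeping device. Recall that $\cos(2\theta) = g(N,PN)$, so horizontality ($\theta \equiv 0$) is equivalent to $PN = N$, i.e. $N \in \Dfour$ by \Cref{thm:characterisation_special_values}. The strategy is: (i) horizontal $\iff$ $N \in \Dfour$ $\iff$ the four vectors $N, JN, \Jo N, PN$ span only a two-dimensional space; (ii) this span being two-dimensional $\iff$ $P$-invariance of the tangent distribution $\Span\{JN\}^{\perp} \cap T\mathcal{H}$ (equivalently the statement phrased as $\Span\{JN\} \cap T\mathcal{H}$ preserved by $P$); (iii) horizontal $\iff$ $\mathcal{H}$ is the pre-image of a hypersurface in $\sphere{4}$ under the twistor fibration $\tau$.

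For step (i), if $N \in \Dfour$ then $PN = N$ and $\Jo N = -J P N \cdot(\text{sign})$ — more precisely $J = P\Jo$, so $JN = P \Jo N$, and since $\Jo$ preserves $\Dfour$ (as $\Jo$ commutes with $P$), we get $\Jo N \in \Dfour$ and $PN = N$, $P\Jo N = \Jo N$; hence $\Span\{N, JN, \Jo N, PN\} = \Span\{N, \Jo N\}$, which is two-dimensional ($\Jo N \perp N$). Conversely, decompose $N = \mathcal{P}_1 N + \mathcal{P}_2 N$ into its $\Dtwo$- and $\Dfour$-parts. Writing out $JN, \Jo N, PN$ in terms of these pieces and using $PN = \mathcal{P}_2 N - \mathcal{P}_1 N$, one checks that if $N$ has a nonzero component in both $\Dtwo$ and $\Dfour$ then $N$ and $PN$ are linearly independent and moreover $JN, \Jo N$ add two further independent directions (this is exactly what \Cref{thm:frame_non_horizontal} makes explicit: in the non-horizontal case $e_1, e_2, e_3$ together with $N$ are built precisely from these four vectors and are linearly independent, so the span is at least three- (in fact four-) dimensional). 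So two-dimensionality forces $\mathcal{P}_1 N = 0$ or $\mathcal{P}_2 N = 0$; the latter is excluded by \Cref{thm:no_vertical_hypersurfaces}, leaving $N \in \Dfour$.

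For step (ii), note $T\mathcal{H} = \{N\}^{\perp}$, and the tangent distribution in question is $\mathcal{D}_\mathcal{H} := \{N, JN\}^{\perp}$ (the orthogonal complement within $T\mathcal{H}$ of the Reeb-type direction $JN$), which is five- minus-one $=$ four-dimensional. This $\mathcal{D}_\mathcal{H}$ is $P$-invariant iff $P$ maps $\{N, JN\}^{\perp}$ into itself, iff both $PN$ and $P(JN)$ lie in $\Span\{N, JN\}$ (since $P$ is symmetric and $P^2 = \identity$, invariance of a subspace is equivalent to invariance of its orthogonal complement). But $P(JN) = P P \Jo N = \Jo N$, so $P$-invariance of $\mathcal{D}_\mathcal{H}$ amounts to $PN, \Jo N \in \Span\{N, JN\}$, which combined with the obvious $N, JN \in \Span\{N, JN, \Jo N, PN\}$ is exactly the condition that $\Span\{N, JN, \Jo N, PN\}$ is two-dimensional. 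Thus (ii) is equivalent to step (i)'s middle condition.

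For step (iii), recall from \Cref{sec:description} that $\Dtwo$ is integrable with leaves equal to the fibres of the twistor fibration $\tau: \CP^3 \to \sphere{4}$. A hypersurface $\mathcal{H}$ is $\tau$-saturated (a union of twistor fibres, hence the pre-image $\tau^{-1}(\Sigma)$ of a hypersurface $\Sigma \subset \sphere{4}$) precisely when $\Dtwo \subset T\mathcal{H}$ at every point, i.e. when $N \perp \Dtwo$, i.e. when $N \in \Dfour$ — which is horizontality. For the forward direction one also checks that $\tau(\mathcal{H})$ is genuinely a smooth hypersurface of $\sphere{4}$: since $\tau$ is a submersion and $\mathcal{H}$ is a codimension-one submanifold containing the full vertical space at each point, $\dd\tau$ restricted to $T\mathcal{H}$ is surjective onto $T\sphere{4}$ with kernel exactly $\Dtwo$, so $\dd\tau(T\mathcal{H})$ has dimension $3$ and locally $\tau(\mathcal{H})$ is an immersed hypersurface. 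The main obstacle is the careful linear-algebra case analysis in step (i): one must show that having a nonzero $\Dtwo$-component \emph{and} a nonzero $\Dfour$-component in $N$ truly produces four independent vectors among $N, JN, \Jo N, PN$ rather than only three; this is where one leans on the explicit frame of \Cref{thm:frame_non_horizontal}, whose vectors $e_1, e_2, e_3$ are (up to normalisation) $\mathcal{P}_2 JN$, $\mathcal{P}_1 JN$ and $PN - \cos(2\theta)N$, manifestly independent and spanning together with $N$ the same space as $\{N, JN, \Jo N, PN\}$ — so in the non-horizontal case that span is four-dimensional, closing the argument.
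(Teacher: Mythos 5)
Your argument is correct, and its backbone is the same as the paper's: reduce every condition to $N\in\Dfour$, exclude the degenerate alternative via \Cref{thm:no_vertical_hypersurfaces}, and read off the rest from the frames of \Cref{thm:frame_horizontal,thm:frame_non_horizontal}. Two of your sub-arguments, however, genuinely differ from the paper's and are worth noting. For the $P$-invariance condition, the paper argues by contradiction in the non-horizontal frame, computing $g(e_3,JN)=0$ while $g(Pe_3,N)=\sin(2\theta)\neq 0$; you instead prove the second and third bullets equivalent to each other by pure linear algebra ($P$ is a $g$-symmetric involution, so invariance of $\set{N,JN}^{\perp}$ is equivalent to $PN,\,PJN=\Jo N\in\Span\set{N,JN}$, i.e.\ to the two-dimensionality of the span), which avoids any frame computation for that step — note, though, that you silently replace the literal wording $\Span\set{JN}\cap T\mathcal{H}$ by the complement $\set{JN}^{\perp}\cap T\mathcal{H}$; this is the reading the paper's own proof uses, and in any case both readings are equivalent to horizontality here, so nothing is lost. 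For ``horizontal $\Rightarrow$ pre-image of a hypersurface in $\sphere{4}$'', the paper pushes the frame down by $\dd\tau$ and invokes involutivity plus Frobenius in $\sphere{4}$, whereas you use a constant-rank argument: $\Dtwo\subset T\mathcal{H}$, so $\dd\tau|_{T\mathcal{H}}$ has kernel $\Dtwo$ and rank three, hence $\tau(\mathcal{H})$ is locally a hypersurface $\Sigma\subset\sphere{4}$ and $\mathcal{H}$, being five-dimensional and locally contained in $\tau^{-1}(\Sigma)$, is locally open in that pre-image. This is arguably cleaner and avoids checking involutivity, at the cost of being a local statement (which matches the paper's ``locally congruent'' conventions anyway). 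Your use of the non-horizontal frame to see that the span is four-dimensional when $\theta\in(0,\pi/2)$ is fine and is essentially the paper's orthogonality argument in explicit coordinates; like the paper, it tacitly excludes $\theta=\pi/2$ pointwise via \Cref{thm:no_vertical_hypersurfaces}, so you are at the same level of rigour as the original.
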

\begin{proof}
    Suppose $\mathcal{H}$ is horizontal, i.e. $\theta=0$. Then, the second and third points are obvious. For the fourth, consider the frame of \Cref{thm:frame_horizontal}. 
    Note that
    $e_1, e_2$ span the vertical space of the twistor fibration. Therefore, $\set{\dd \tau(JN), \dd \tau(\Phi N), \dd \tau(\Psi N), \dd \tau(N)}$ forms an orthonormal frame of $\sphere{4}$. Note that
    $\dd \tau( \nabla_X Y ) = \nabla^{\sphere{4}}_X Y$, since $\tau$ is a Riemannian submersion. 
    Then, we can check that 
     $\set{\dd \tau(JN), \dd \tau(\Phi N), \dd \tau(\Psi N)}$ forms an involutive distribution in $\sphere{4}$. Hence, by the Frobenius theorem, this distribution is integrable and the leafs are hypersurfaces in $\sphere{4}$. 

    Now we show that the three last points imply that $\mathcal{H}$ is horizontal. 
    First, assume the distribution $\Span\set{N, JN, \Jo N, PN}$ is two-dimensional. Since $N$ and $JN$ are orthogonal, and also $N$ and $\Jo N$ are orthogonal, we have $JN = \pm \Jo N$ or $N = \pm P N$. By \Cref{thm:no_vertical_hypersurfaces}, we know that then $N = PN$ and $\mathcal{H}$ is horizontal by \Cref{thm:characterisation_special_values}. 

    Second, assume $\mathcal{H}$ is $P$-invariant. If $\mathcal{H}$ is non-horizontal, we can work with the frame of \Cref{thm:frame_non_horizontal}, and consider $e_3$. We can compute that $g(e_3, JN)=0$ and $g(Pe_3, N) = \sin(2 \theta)$. Hence, $\mathcal{H}$ cannot be $P$-invariant, which is a contradiction. Hence, $\mathcal{H}$ has to be horizontal.

    Third, suppose $\mathcal{H}$ is the pre-image of a hypersurface $M$ in $\sphere{4}$. By taking a frame on $M$ inside $\sphere{4}$ and lifting this frame horizontally, we find that $N$ must be in $\Dfour$, which means that the pre-image is a horizontal hypersurface of nearly Kähler $\CP^3$.
\end{proof}

Finally, the frames allow to find the following relation between a normal of the hypersurface for $(\CP^3,g_a)$ and for the nearly Kähler $\CP^3$.
\begin{proposition}\label{thm:normal_ga_gNK}
    Let $\mathcal{H}$ be a hypersurface in $\CP^3$. 
    Let $N$ be the unit normal for the nearly Kähler metric $g$ and $N_a$ the unit normal for a metric $g_a$ on $\CP^3$. Then, up to sign, they are connected as follows
    \begin{align*}
        N_a &= \frac{1}{\sqrt{2+a +(2-a)\cos(2 \theta)}} \frac{1}{\sqrt{2a}}\l( (a+2)N - (a-2) PN \r) \\
        N &= \frac{\sqrt{2+a -(a-2)\cos(2 \theta)}}{4 \sqrt{2a}} ( (a+2) N_a + (a-2) P N_a ).
    \end{align*}
\end{proposition}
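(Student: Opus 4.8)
The plan is to reduce the whole statement to elementary linear algebra in the plane $\Span\{N,PN\}$, using a single algebraic identity comparing the two metrics. The starting point is the \emph{metric comparison identity}: for all tangent vectors $X,Y$,
\[ g_a(X,Y) = \frac{a+2}{4}\,g(X,Y) + \frac{a-2}{4}\,g(PX,Y). \]
This follows by a direct expansion from the defining formulas $g = \frac{3}{2}\FSm + \frac{1}{2}\FSm(P\,\cdot\,,\cdot)$ and $g_a = \frac{1+a}{2}\FSm + \frac{a-1}{2}\FSm(P\,\cdot\,,\cdot)$ together with the fact that $P$ is a symmetric involution for $\FSm$ (since $\Dtwo \perp_{\FSm} \Dfour$ and $P^2 = \identity$), hence also $\FSm$-orthogonal and thus $g$- and $g_a$-orthogonal: both sides are easily checked to be the same linear combination of $\FSm$ and $\FSm(P\,\cdot\,,\cdot)$. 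The denominator $4$ is harmless, and $a>0$ for every homogeneous metric, which will matter for the normalisation below.

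Next I would use that the tangent space $T\mathcal H$ is the same for every metric, so that $N_a$ is characterised (up to sign and length) by $g_a(N_a,X)=0$ for all $X\in T\mathcal H$. By the comparison identity this is equivalent to saying that $\frac{a+2}{4}N_a + \frac{a-2}{4}PN_a$ is $g$-orthogonal to $T\mathcal H$, hence $g$-proportional to $N$; say $\frac{a+2}{4}N_a + \frac{a-2}{4}PN_a = cN$ for a scalar $c$. Applying $P$ and using $P^2=\identity$ gives the companion equation $\frac{a-2}{4}N_a + \frac{a+2}{4}PN_a = cPN$. The resulting $2\times 2$ linear system for $(N_a,PN_a)$ has determinant $\tfrac{1}{16}\big((a+2)^2-(a-2)^2\big)=\tfrac a2\neq 0$, and solving it yields
\[ N_a = \frac{c}{2a}\big((a+2)N-(a-2)PN\big). \]

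It remains to pin down $c$ up to sign by imposing $g_a(N_a,N_a)=1$. Applying the comparison identity once more and expanding $N_a$ in the frame $\{N,PN\}$, everything reduces to $g(N,N)=g(PN,PN)=1$ (the latter since $P$ is a $g$-isometry) and $g(N,PN)=\cos(2\theta)$, which for $a=2$ is precisely the definition of the angle function — so $\theta$ here is the nearly Kähler angle $\theta_2$. A short simplification gives $g_a(N_a,N_a) = \frac{c^2}{2a}\big(2+a+(2-a)\cos(2\theta)\big)$, whence $c^2 = \frac{2a}{2+a+(2-a)\cos(2\theta)}$ (the bracket is positive for all $a>0$ and $\theta\in[0,\pi/2]$), and substituting back recovers the stated formula for $N_a$. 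The second formula then comes from inverting the already-linear relation: from the expression for $N_a$ and its $P$-image one computes $(a+2)N_a+(a-2)PN_a = 4c\,N$, so $N = \frac{1}{4c}\big((a+2)N_a+(a-2)PN_a\big)$, and $\frac{1}{4c} = \frac{\sqrt{2+a+(2-a)\cos(2\theta)}}{4\sqrt{2a}}$, which matches after noting $(2-a)\cos(2\theta) = -(a-2)\cos(2\theta)$. There is no genuine obstacle here; the only care needed is bookkeeping of the signs and the normalisation, and keeping $\theta$ fixed throughout as the angle function of the nearly Kähler metric.
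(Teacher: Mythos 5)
Your proof is correct, and it takes a genuinely different route from the paper's. The paper works in the explicit $g_a$-orthogonal frame $\set{e_1,\dots,e_5}$ of \Cref{thm:frame_non_horizontal} (cf.\ \Cref{remark:orthogonality_for_all_ga}): it performs a Gram--Schmidt step, $\tilde N_a = N - \sum_i g_a(N,e_i^a)e_i^a$, normalises, and reads off the formula by a frame computation, with the inverse formula obtained from $P^2=\identity$. You instead avoid the frame entirely: the comparison identity $g_a = \frac{a+2}{4}\,g + \frac{a-2}{4}\,g(P\,\cdot\,,\cdot)$ (which does hold, using that $P$ is a $g$- and $\FSm$-symmetric involution because $\Dtwo\perp\Dfour$), together with the characterisation of the normal line by $g_a$-orthogonality to $T\mathcal H$, reduces everything to a $2\times2$ linear system in $\Span\set{N,PN}$ with determinant $a/2\neq0$, followed by the normalisation $c^2 = 2a/\bigl(2+a+(2-a)\cos(2\theta)\bigl)$; I checked the coefficient bookkeeping and it reproduces both stated formulas, including the sign ambiguity and the identification of $\theta$ as the nearly Kähler angle $g(PN,N)=\cos(2\theta)$. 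What your route buys is uniformity and transparency: it treats horizontal ($PN=N$) and non-horizontal hypersurfaces at once, whereas the frame of \Cref{thm:frame_non_horizontal} is only defined off the horizontal locus, and it makes the inversion giving the second formula an immediate consequence of the same linear system. What the paper's route buys is consistency with the frame machinery used throughout the rest of the article, where the same frame is needed anyway for the curvature and Codazzi computations.
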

\begin{proof}
    Since $\set{e_1, \hdots, e_5}$ is orthogonal for $g_a$, the frame $\set{e^a_i}$ with $e^a_i = e_i / \norm{e_i}_a$ forms a frame for $\mathcal{H}$ that is $g_a$-orthonormal. Then, define $\tilde{N}_a = N - \sum_{i=1}^5 g_a(N, e^a_i)e^a_i$ and $N_a = \tilde{N}_a / \norm{\tilde{N}_a}_a$ and note that $N_a$ is a $g_a$-unit normal for $\mathcal{H}$. Now, the first formula is a simple computation in the frame. To obtain the second formula, use that $P^2 = \identity$.
\end{proof}

\subsection{Linking isotropic and constant angle hypersurfaces}
In the previous result, we linked the normal of a hypersurface according to the nearly Kähler metric to the normal according to another $g_a$ metric. With this, we can also link $g_a$-isotropic hypersurfaces with hypersurfaces with constant nearly Kähler angle. Indeed, given the nearly Kähler angle $\theta$, defined by $g(KN, N) = \cos 2 \theta$, we can retrieve the $g_a$ angle of the hypersurface via the following equation: 
\begin{equation}
     \cos(2 \theta_a) = g_a( K N_a, N_a ) = \frac{2 -a +(2+a) \cos (2 \theta )}{2+a +  (2-a) \cos (2 \theta ) }.
\end{equation}
Given that $\theta$ is constant and non-zero, we can set $a = 2 \cot^2 \theta >0$. Then, $g_a(KN_a, N_a) = 0$, so that the hypersurface is $g_a$-isotropic if $\theta>0$ is constant. Conversely, suppose that the hypersurface is $g_a$-isotropic. Then, we find that the corresponding nearly Kähler angle is given by 
\[ \frac{1}{2}\arccos\frac{a-2}{a+2}, \] and thus in particular constant. Note that this angle is strictly between $0$ and $\pi/2$. Moreover, as $a$ tends to $0$, the corresponding angle tends to $\pi/2$, and as $a$ grows unbounded, the angle tends to $0$. In other words, we just proved the following result.
\begin{proposition}\label{thm:link_ga_isotropic_constant_angle}
    Let $\mathcal{H}$ be a non-horizontal hypersurface in nearly Kähler $\CP^3$. Then, $\mathcal{H}$ has constant nearly Kähler angle $\theta$ if and only if $\mathcal{H}$ is $g_a$-isotropic for some $a$. In particular, the angle $\theta$ and the metric parameter $a$ are linked as
    \[ a = 2 \cot^2 \theta \quad \text{ and } \quad \theta = \frac{1}{2}\arccos\frac{a-2}{a+2}. \]
\end{proposition}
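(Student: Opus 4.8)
The plan is to deduce the whole statement from a single identity relating the two angle functions of a \emph{fixed} hypersurface,
\[
\cos(2\theta_a)\;=\;\frac{2-a+(2+a)\cos(2\theta)}{2+a+(2-a)\cos(2\theta)},
\]
and then to read everything off from the observation that, for fixed $a>0$, the right-hand side is a Möbius transformation of $\cos(2\theta)$, hence invertible.

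To prove the identity I would start from $\cos(2\theta_a)=g_a(N_a,PN_a)$ and substitute the expression for $N_a$ in terms of $N$ and $PN$ provided by \Cref{thm:normal_ga_gNK}; since $P^2=\identity$ the same formula gives $PN_a$. Expanding $g_a(N_a,PN_a)$ then involves only $g_a(N,N)$, $g_a(PN,PN)$ and $g_a(N,PN)$. Using that $P$ is symmetric (hence an isometry) for each of $\FSm$, $g$, $g_a$, one has $g_a(PN,PN)=g_a(N,N)$; and, rewriting everything through $\FSm(N,N)$ and $\FSm(N,PN)$ by means of $g=\tfrac32\FSm+\tfrac12\FSm(\cdot,P\cdot)$ and $g_a=\tfrac{1+a}{2}\FSm+\tfrac{a-1}{2}\FSm(P\cdot,\cdot)$, together with the normalisation $g(N,N)=1$ and the definition $g(N,PN)=\cos(2\theta)$, one solves for $\FSm(N,N)$ and $\FSm(N,PN)$ in terms of $\cos(2\theta)$. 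Plugging back in and cancelling a common factor $2a$ produces the displayed identity. The only delicate point is keeping track of the $g$- versus $\FSm$-normalisation of $N$; the algebra is otherwise routine and is already consistent with the normalising constants in \Cref{thm:normal_ga_gNK}.

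Next, observe that for fixed $a>0$ the map $t\mapsto \dfrac{(2+a)t+(2-a)}{(2-a)t+(2+a)}$ has determinant $(2+a)^2-(2-a)^2=8a\neq 0$, so it is an injective fractional-linear map, with inverse $s\mapsto\dfrac{(2+a)s-(2-a)}{-(2-a)s+(2+a)}$. In particular $\theta_a$ is constant on $\mathcal H$ precisely when $\theta$ is, and the value $\theta_a\equiv\pi/4$, i.e.\ $\cos(2\theta_a)=0$, corresponds exactly to $\cos(2\theta)=\tfrac{a-2}{a+2}$.

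Finally I would run the two implications. If $\mathcal H$ is $g_a$-isotropic, the previous line gives $\cos(2\theta)=\tfrac{a-2}{a+2}$, so $\theta=\tfrac12\arccos\tfrac{a-2}{a+2}$ is constant and, since $\tfrac{a-2}{a+2}\in(-1,1)$ for $a\in(0,\infty)$, lies strictly between $0$ and $\pi/2$. Conversely, if the nearly Kähler angle is a constant $\theta$, then $\theta\in(0,\pi/2)$ --- it is nonzero since $\mathcal H$ is non-horizontal, and not $\pi/2$ by \Cref{thm:no_vertical_hypersurfaces} --- so one may set $a=2\cot^2\theta>0$; a one-line trigonometric check gives $\tfrac{a-2}{a+2}=\tfrac{\cos^2\theta-\sin^2\theta}{\cos^2\theta+\sin^2\theta}=\cos(2\theta)$, whence $\cos(2\theta_a)=0$ and $\mathcal H$ is $g_a$-isotropic. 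This simultaneously yields the stated correspondence $a=2\cot^2\theta$, $\theta=\tfrac12\arccos\tfrac{a-2}{a+2}$. The only genuine obstacle is the first step, the clean derivation of the Möbius identity; that is a bounded computation, and the remainder is elementary.
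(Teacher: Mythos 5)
Your proposal is correct and follows essentially the same route as the paper: the paper likewise derives the identity $\cos(2\theta_a)=\dfrac{2-a+(2+a)\cos(2\theta)}{2+a+(2-a)\cos(2\theta)}$ from the normal relation of \cref{thm:normal_ga_gNK} and then reads off the correspondence $a=2\cot^2\theta$, $\theta=\tfrac12\arccos\tfrac{a-2}{a+2}$. Your additional remarks (invertibility of the Möbius map, excluding $\theta=\pi/2$ via \cref{thm:no_vertical_hypersurfaces}) only make explicit points the paper leaves implicit.
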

\begin{remark}
    Only for the horizontal case, $\theta = 0$, there is no other homogeneous metric linked to reduce to an isotropic case.
\end{remark}











\section{Examples of Hypersurfaces}\label{sec:examples}

We now give a family of examples of extrinsically homogeneous hypersurfaces that are Hopf in nearly Kähler $\CP^3$, and have many interesting properties.

Let $t \in (0, \pi/2)$ be constant and define $\psi_t$ by
\begin{equation}\label{eq:family_Hopf_hypersurfaces}
 	\psi_t: \sphere{3}\times\sphere{3} \to \sphere{7}: (p,q) \mapsto ( \cos t \, p, \sin t\, q ).
\end{equation} 
Denote $\lift{\mathcal{H}}_t = \psi_t(\sphere{3}\times\sphere{3})$ the hypersurface immersed in $\sphere{7}$, and let $\mathcal{H}_t = \pi(\lift{\mathcal{H}}_t)$ be the hypersurface in $\CP^3$.

\subsection{\texorpdfstring{$\mathcal{H}_t$}{ℋₜ} is extrinsically homogeneous}
First of all, note that $\lift{\mathcal{H}}_t$ is homogeneous. Indeed, we can identify $SU(2)$ with the unit quaternions $\sphere{3} \subset \quaternions$. Then, the group $\SU(2)\times \SU(2)$ acts transitively via the following action: $(A,B) \cdot (p,q) \coloneqq (Ap, Bq)$. Since $\lift{\mathcal{H}}_t$ is homogeneous, so is $\mathcal{H}_t$ via the action $(A,B) \cdot \pi(p,q) \coloneqq \pi(Ap, Bq)$. Moreover, note that the induced action of $\SU(2)\times \SU(2)$ on $\mathcal{H}_t$ is by $g_a$-isometries for all $a$, since $\mqty(A & 0 \\ 0 & B)$ is an element of $\Sp(2)$ for all unit quaternions $A,B$. For the isotropy, we choose the point $o = \pi\circ\psi_t(1,1)$, and we readily find that $(A,B)\cdot o = o$ if and only if $A=B = e^{i \alpha}$ for some real $\alpha$. We thus have \[ \mathcal{H}_t \cong \frac{\SU(2)\times \SU(2)}{ \Delta\U(1) } \] and that $\mathcal{H}_t$ is extrinsically homogeneous. Moreover, we thus find that $\mathcal{H}_t$ is diffeomorphic to $\sphere{2}\times \sphere{3}$ via the map $\mathcal{H}_t \to \sphere{2}\times \sphere{3}: (p,q)\Delta\U(1) \mapsto (p \Delta\U(1), q^{-1}p )$.

\subsection{\texorpdfstring{$\mathcal{H}_t$}{ℋₜ} is Hopf}
To prove that $\mathcal{H}_t$ is Hopf, we choose Hopf coordinates on $\sphere{3}$:
\begin{align*}
    p &= ( \cos x_2 \sin x_1 , \sin x_2 \sin x_1, \cos x_3 \cos x_1, \sin x_3 \cos x_1 ) \\
    q &= ( \cos y_2 \sin y_1 , \sin y_2 \sin y_1, \cos y_3 \cos y_1, \sin y_3 \cos y_1 ). 
\end{align*}

We have the coordinate frame
\begin{align*}
    f_{i} &= \pdv{\psi_t}{x_i} & & i\in \set{1,2,3} &
    f_{i} &= \pdv{\psi_t}{y_{i-3}} & &  i\in \set{4,5,6},
\end{align*} which is orthogonal with respect to the round metric on $\sphere{7}$. The normal (round metric) $\xi$ on $\lift{\mathcal{H}}_t$ is given by $\xi = (\sin t \, p, -\cos t \, q )$. Denote $\langle \cdot, \cdot\rangle$ the standard round metric on $\sphere{7}$, and $\sphere{3}$. By embedding $\sphere{3} \subset \quaternions $ and $\sphere{7} \subset \quaternions^2$, we find that $\langle \psi_t, \xi \rangle = \langle i \psi_t, \xi \rangle = \langle j \psi_t, \xi \rangle = \langle k \psi_t, \xi \rangle = 0$.
Hence, $\xi \in \lift{\Dfour}$.

Since $i \xi$ is tangent to $\lift{\mathcal{H}}_t$, we can decompose it with respect to the coordinate frame $\set{f_i}$. We have $i \xi = \tan t\, (f_2 + f_3) - \cot t\, (f_5 + f_6)$.
Hence, the shape operator $\lift{\FS{A}}$ associated to $\xi$ and the metric induced from the round metric acts as follows on $i \xi$
\begin{align*}
    \lift{\FS{A}}_\xi (i \xi) 
    &= - (\nabla_{i \xi} \xi)^\top 
    = - \tan t \, \l( \pdv{i \xi}{x_2} + \pdv{i \xi}{x_3} \r)^\top + \cot t \, \l( \pdv{i \xi}{y_2} + \pdv{i \xi}{y_3} \r)^\top \\
    &= (\cot t - \tan t) i \xi  + i \psi_t
    = 2\cot(2t) \, i \xi  + i \psi_t.
\end{align*}

Projecting this result to $\CP^3$, and writing $\FS{N} = \dd\pi \xi$, we have 
\begin{align*}
    \FS{A}_{\FS{N}} ( \Jo \FS{N}) 
    = 2 \cot(2t) \, \Jo \FS{N},
\end{align*} where $\FS{A}_{\FS{N}}$ is the shape operator with respect to $\FSm$. In other words, $\mathcal{H}_t=\pi(\lift{\mathcal{H}}_t)$ is a Hopf hypersurface in Kähler $\CP^3$.

Since $\FS{N} \in \Dfour$, we have $\Jo \FS{N} = J \FS{N}$ and that $N = \FS{N}/\sqrt{2}$ is a $g$-unit normal on $\mathcal{H}$. The shape operator $A_{\FS{N}}$ with respect to $g$ is related to $\FS{A}_{\FS{N}}$ as $A_{\FS{N}} X = \FS{A}_{\FS{N}} X - D(X,\FS{N})^\top$. Since $\FS{N}, \Jo \FS{N} \in \Dfour$, the difference tensor $D(\Jo \FS{N}, \FS{N})^\top$ vanishes. Hence, we obtain
\begin{align*}
    A_{N} ( J N)
    = \sqrt{2} \cot(2t) J N.
\end{align*}
As such, $\mathcal{H}_t$ is also a Hopf hypersurface in nearly Kähler $\CP^3$. Furthermore, with the same arguments, we find that these examples are also Hopf in $(\CP^3, g_a, J)$ and $(\CP^3, g_a, \Jo)$.

\subsection{\texorpdfstring{$\mathcal{H}_t$}{ℋₜ} as a tube}
We can also calculate the principal curvatures for the Fubini-Study metric of this example, and find $2 \cot(2t)$ with multiplicity one, and $-\tan t$ and $\cot t$, each with multiplicity two. From the classification in \Cite{takagi1975}, we then find that these examples are locally congruent to tubes over a $\FSm$-totally geodesic $\CP^1$.

\subsection{\texorpdfstring{$\mathcal{H}_t$}{ℋₜ} projects to geodesic \texorpdfstring{$\sphere{3} \subset \sphere{4}$}{𝕊³ ⊂ 𝕊⁴}}
Let $\tau$ denote the twistor fibration. We have the following mapping:
\[ \tau \circ \pi : \sphere{7} \subset \quaternions^2 \to \sphere{4}  \cong \quaternions P^1:  (p,q) \mapsto [(p,q)].   \]
We find that $\eval{\tau \circ \pi}_{\Im \psi_t }$ gives a hypersurface in $\sphere{4}$. Moreover, by studying this hypersurface further, we find that it is a geodesic $3$-sphere inside $\sphere{4}$. Moreover, it is a totally geodesic grand $3$-sphere if and only if $t = \pi/4$. This case coincides exactly with $JN$ being in the kernel of $A_N$. Finally, only in this case, $\mathcal{H}_t$ is minimal (in both Kähler and nearly Kähler $\CP^3$).

\subsection{\texorpdfstring{$\mathcal{H}_t$}{ℋₜ}: extrinsic and intrinsic differences }
Further on the mean curvature, we have that $H = \frac{1}{\sqrt{a}}2 \cot(2t) N$, so that $\mathcal{H}_t$ is extrinsically different for all $t$, for all $a$. The non-normalised scalar curvature, however, is given by 
$\frac{2 \left(2 \sqrt{2} a^{3/2}+6 a^3+a^2 \left(6 \lambda ^2+20\right)-21 a-2 \sqrt{2} \sqrt{a}+18\right)}{a^3}$, 
 and so this only says that for all $\pi/2> t \geq \pi/4$, $\mathcal{H}_t$ is intrinsically different. Interestingly, however, there is an intrinsic mirror symmetry around $t = \pi/4$ that we will discuss here.

The pull-back metric $g^t_{(p,q)}$ on $\SU(2)\times \SU(2) \cong \sphere{3}\times \sphere{3}$ from the immersion of $\mathcal{H}_t$ into the squashed $7$-sphere is given by
\begin{align*}
    g^t_{(p,q)}((v_1,w_1),(v_2,w_2) ) &= a \l( \cos^2 t \, \langle v_1, v_2\rangle_p + \sin^2 t \, \langle w_1, w_2\rangle_q \r) \\ 
    &+ (1-a) \l(
    \cos^2 t \langle v_1, a_1 j v_2 + a_2 k v_2  \rangle_p +
    \sin^2 t \langle w_1, a_1 j w_2 + a_2 k w_2  \rangle_q \r),
\end{align*}
where
    $a_1 =  \cos^2 t \langle v_2, jp \rangle + \sin^2 t \langle w_2, jq \rangle$,
    $a_2 =  \cos^2 t \langle v_2, k p \rangle + \sin^2 t \langle w_2,  k q \rangle$,
and $\langle \cdot, \cdot \rangle$ denotes the standard inner product on $\sphere{3}$. Note that all these metrics are non-degenerate, since $t \in (0, \pi/2)$. Moreover, it takes a simple computation to show that the $\Delta\U(1)$-action of the isotropy leaves these metrics invariant. Hence, the metric descends to $\frac{\SU(2)\times \SU(2)}{\Delta\U(1)}$, and we will denote this metric with the same symbol. With this metric on the quotient, we get that \[ \l( \frac{\SU(2)\times \SU(2)}{\Delta\U(1)}, g^t \r) \to \l( \mathcal{H}_t, g \r): (A,B)\Delta\U(1) \mapsto (\pi\circ \psi_t) (A,B), \] is an isometry, where $g$ denotes the induced metric from $\CP^3$ to $\mathcal{H}_t$. 

Consider for all $s \in (0, \pi/4)$ the map \[ \rho_s : \l( \frac{\SU(2)\times \SU(2)}{\Delta\U(1)}, g^{s_-} \r) \to \l( \frac{\SU(2)\times \SU(2)}{\Delta\U(1)}, g^{s_+} \r): (p,q)\Delta\U(1) \mapsto (q,p)\Delta\U(1), \] where $s_\pm = \frac{\pi}{4}\pm s$. 
Since $\sin^2\l( s_- \r) = \cos^2\l( s_+ \r)$ and $\cos^2\l( s_- \r) = \sin^2\l( s_+ \r)$, we see that $\rho_s$ is an isometry. Hence, $\mathcal{H}_{s_+}$ and $\mathcal{H}_{s_-}$ are intrinsically the same, but extrinsically different.

\begin{remark}
    Given the immersion of \Cref{eq:family_Hopf_hypersurfaces_intro}, we can act with an element of the isometry group of Kähler $\CP^3$, which is not an isometry of nearly Kähler $\CP^3$, to obtain a non-congruent hypersurface, which will still be Hopf for Kähler $\CP^3$. It takes a straightforward computation to show that the angle for all of these hypersurfaces is not constant.
\end{remark}

\section{Hopf hypersurfaces in the nearly Kähler \texorpdfstring{$\CP^3$}{ℂP³}}\label{sec:hopf_hypersurfaces}

In this section, we classify the Hopf hypersurfaces of nearly Kähler $\CP^3$. We start by excluding constant angle Hopf hypersurfaces that are not horizontal. 

\begin{proposition}\label{thm:hopf_cst_angle_is_horizontal}
  Suppose $\mathcal{H}$ is a Hopf hypersurface of nearly Kähler $\CP^3$ with constant angle $\theta$, then $\mathcal{H}$ must be horizontal.
\end{proposition}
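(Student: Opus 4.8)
The plan is to work by contradiction: assume $\mathcal{H}$ is Hopf with constant angle $\theta \in (0,\pi/2]$ that is \emph{not} horizontal, so that in particular $\sin\theta \neq 0$, and derive an impossibility. Since the hypersurface is non-horizontal, the first tool available is the orthonormal frame $\{e_1,\dots,e_5\}$ of \Cref{thm:frame_non_horizontal}, in which $e_1 = \tfrac{1}{\cos\theta}\mathcal{P}_2 JN$, $e_2 = \tfrac{1}{\sin\theta}\mathcal{P}_1 JN$, $e_3 = \tfrac{PN - \cos(2\theta)N}{\sin(2\theta)}$, $e_4 = \tfrac{G(e_3,N)}{\sin(2\theta)}$, $e_5 = Je_4$. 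Crucially, that lemma also gives the derivative identity $\alpha(e_i, e_3) = e_i(\theta) + \tfrac12 \delta_{i5}$. Because $\theta$ is constant, $e_i(\theta) = 0$ for all $i$, so this collapses to $\alpha(e_i, e_3) = \tfrac12 \delta_{i5}$: the shape operator of $N$ satisfies $g(A e_3, e_i) = \tfrac12\delta_{i5}$, i.e. $A e_3 = \tfrac12 e_5$.

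Next I would extract the Hopf condition in this frame. The Hopf direction is $JN$, which in the frame is $JN = \cos\theta\, e_1 + \sin\theta\, e_2$ (a unit tangent vector). Being Hopf means $A(JN) = \lambda\, JN$ for some function $\lambda$; pairing with $e_3$ and using $A$ self-adjoint gives $g(A(JN), e_3) = g(JN, Ae_3) = \tfrac12 g(JN, e_5) = 0$, which is automatically consistent, so the Hopf condition instead must be fed back through the Codazzi equation. The heart of the argument is therefore to evaluate the Codazzi equation \Cref{eq:Codazzi}, $g(R(X,Y)Z,N) = g((\nabla^\perp h)(X,Y,Z) - (\nabla^\perp h)(Y,X,Z), N)$, on suitable triples drawn from $\{e_1,\dots,e_5\}$ — most naturally triples involving $e_3$ and $JN$ — and to compare it with the explicit curvature tensor $R = R^a$ (specialised to $a=2$, the nearly Kähler case) quoted in \Cref{sec:description}. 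On the left one gets explicit constants and $\sin\theta, \cos\theta$ factors coming from $G$, $J$, $\Jo$, $P$ acting on frame vectors; on the right, the terms $(\nabla^\perp h)(X,Y,Z) = X(\alpha(Y,Z)) - \alpha(\nabla_X Y, Z) - \alpha(Y, \nabla_X Z)$ must be expanded using the known connection coefficients among the $e_i$ (obtainable from $\nabla J$, $\nabla\Jo$, and the constant-type identity $\nabla G(X,Y,Z) = g(Z,X)JY - g(Y,X)JZ - g(JY,Z)X$ as in the proof of \Cref{thm:frame_non_horizontal}) together with the partial information $Ae_3 = \tfrac12 e_5$ and the Hopf eigenvalue relation. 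I expect several such Codazzi evaluations to produce algebraic relations linking $\lambda$, $\theta$, and the remaining unknown entries of $A$; combining them should force $\sin\theta = 0$, contradicting non-horizontality.

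The main obstacle, as usual for this kind of frame computation, is organisational rather than conceptual: one needs the full table of covariant derivatives $\nabla_{e_i} e_j$ (or at least enough of it) and the action of $G$, $J$, $\Jo$, $P$ on each $e_i$, since the frame is defined through these very tensors; keeping the bookkeeping consistent — especially the $\sin\theta$, $\cos\theta$, $\sin 2\theta$ denominators and the way $\mathcal{P}_1, \mathcal{P}_2$ interact with $G$ — is where errors creep in. A secondary subtlety is that the Hopf function $\lambda$ and the off-$e_3$ part of the shape operator are a priori unknown, so one must be careful to use only Codazzi components that either do not involve them or that let one eliminate them; I would look first at $g(R(e_3, JN)\, e_3, N)$ and $g(R(e_3, e_4) e_5, N)$ (or similar), since these mix the forced identity $Ae_3 = \tfrac12 e_5$ with the curvature terms carrying the $P$-twisted pieces $(X\wedge_a Y)PZ$, which are precisely the ones sensitive to $\theta$. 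If a single well-chosen component already yields $\cos 2\theta = \pm 1$, the proof is essentially done after invoking \Cref{thm:no_vertical_hypersurfaces} to rule out the vertical value.
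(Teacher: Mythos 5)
Your set-up is correct and matches the paper's starting point: working in the frame of \Cref{thm:frame_non_horizontal}, constancy of $\theta$ gives $\alpha(e_i,e_3)=\tfrac12\delta_{i5}$, and the Hopf condition is imposed on $JN=\cos\theta\,e_1+\sin\theta\,e_2$. But from that point on your text is a plan rather than a proof: the entire content of the statement lies in the computation you defer (``I expect several such Codazzi evaluations \dots combining them should force $\sin\theta=0$''), and the expectation you state is not what actually happens. First, the Hopf eigenvalue $\lambda$ is only a function, and the first-order Codazzi identities do \emph{not} by themselves close up into a contradiction; what they give is (i) algebraic relations such as $\alpha_{11}=\lambda-\alpha_{12}\tan\theta$, $\alpha_{22}=\lambda-\alpha_{12}\cot\theta$, $\alpha_{1i}=-\alpha_{2i}\tan\theta$, and (ii) formulas for the frame derivatives $e_i(\lambda)$. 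The decisive step, absent from your proposal, is to impose the second-order compatibility conditions $c_{ij}(\lambda)=[e_i,e_j]\lambda-\bigl(e_i(e_j(\lambda))-e_j(e_i(\lambda))\bigr)=0$ on these derivative formulas; it is these integrability conditions (not further raw Codazzi components) that first force $e_1(\lambda)=0$ and then pin the angle.

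Second, your hope that ``a single well-chosen component already yields $\cos 2\theta=\pm1$'' is misplaced: the system does \emph{not} force $\sin\theta=0$. Instead the compatibility condition $c_{35}(\lambda)$ forces the specific interior value $2\theta=\arccos(1/5)$, i.e.\ a perfectly non-horizontal, non-vertical candidate survives at the level of the angle. The contradiction only appears afterwards, at the level of the shape operator: further Codazzi equations (applied to triples involving the rotated frame vector $\tilde e_2=-\sin\theta\,e_1+\cos\theta\,e_2$) determine $\alpha_{25}$, $\alpha_{12}$, $\alpha_{55}$ in terms of $\lambda$, and the final Codazzi equation yields $2+5\lambda^2=0$, which has no real solution. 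So the argument is genuinely two-staged (integrability conditions on $\lambda$, then elimination of $\lambda$ over $\mathbb{R}$), and a proof that only invokes Codazzi components and aims directly at $\sin\theta=0$ would not go through as written. To repair your proposal you would need to actually carry out the frame computation, include the $c_{ij}(\lambda)$ conditions, and replace the target ``$\sin\theta=0$'' by ``no real $\lambda$ exists for the forced value of $\theta$''.
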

\begin{proof}
    Suppose $\mathcal{H}$ is non-horizontal, so that we can work in the frame of \Cref{thm:frame_non_horizontal}. In particular then, we have the following components of the second fundamental form: $2 \alpha_{i3} = \delta_{i5}$. Demanding that $\mathcal{H}$ is Hopf, gives for $i \in \set{3,4,5}$ the following:
    $\alpha_{11} = \lambda -\alpha_{12} \tan\theta$, $\alpha_{22} = \lambda  - \alpha_{12} \cot\theta$ and $\alpha_{1i} = - \alpha_{2i} \tan\theta$.
    We perform the following rotation in the frame: $\tilde{e}_1 = \cos \theta e_1 + \sin \theta e_2$, $\tilde{e}_2 = -\sin \theta e_1 + \cos \theta e_2$ and note that $JN = \tilde{e}_1$.
    From the Codazzi equation applied to $(e_i, e_1, e_1)$ for $i=3, 4, 5$ and $(\tilde{e}_2, e_1, e_1)$, we get
    \begin{align}\label{eq:derivatives_lambda}
        e_2(\lambda) &= \tan\theta e_1(\lambda) &
        e_3(\lambda) &= \frac{1}{2} \sin(2\theta) (5\cos(2\theta) -1) & 
        e_4(\lambda) &= e_5(\lambda) = 0.
    \end{align}
    We then consider the compatibility conditions $c_{ij}$ for $\lambda$, i.e. $c_{ij}(\lambda) = [e_i, e_j]\lambda - ( e_i(e_j(\lambda)) - e_j(e_i(\lambda)) )$.
    From $c_{13}(\lambda)$, we get $e_3(e_1(\lambda)) + \alpha_{12} e_1(\lambda) \sec^2\theta - \lambda e_1(\lambda) \tan\theta=0$. Combining this with $c_{23}(\lambda)$, we get
    $e_1(\lambda) ( \sin(2 \theta) \lambda - 2\alpha_{12} ) = 0$.
    If $e_1(\lambda) \neq 0$, we get a contradiction with the other compatibility conditions. So, we have $e_1(\lambda)=0$. Then, from $c_{35}(\lambda)$, we get $2 \theta = \arccos(1/5)$. From the Codazzi-equation applied to $(e_4, \tilde{e}_2, \tilde{e}_1), (e_4, e_5, \tilde{e}_1), (e_5, \tilde{e}_2, \tilde{e}_1)$, we find $\alpha_{25}=0$, and $\alpha_{12},\alpha_{55}$ in terms of $\lambda$. However, from the Codazzi equation applied to $(\tilde{e}_2, e_4, e_5)$, we then get that $2+5 \lambda^2 = 0$, which has no solution for real $\lambda$. Hence, $\mathcal{H}$ cannot be non-horizontal, Hopf and have constant angle. 
\end{proof}

Knowing \Cref{thm:hopf_cst_angle_is_horizontal}, we can prove that every Hopf hypersurface has to be horizontal.

\begin{proposition}\label{thm:hopf_is_horizontal}
    Suppose $\mathcal{H}$ is a Hopf hypersurface of nearly Kähler $\CP^3$, then $\mathcal{H}$ must be horizontal.
\end{proposition}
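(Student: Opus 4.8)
The plan is to argue by contradiction. Assume $\mathcal{H}$ is Hopf but not horizontal, so that on an open set the angle function $\theta$ takes values in $(0,\pi/2)$, and work there in the $g$-orthonormal frame $\{e_1,\dots,e_5\}$ of \Cref{thm:frame_non_horizontal}. In view of \Cref{thm:hopf_cst_angle_is_horizontal}, it is enough to show that the Hopf condition alone forces $\theta$ to be locally constant: the case of a constant non-zero angle is then excluded by that proposition, which gives the contradiction. So the whole task is to prove that the differential $\dd\theta$ vanishes (or, failing that, to squeeze a numerical contradiction directly out of the same system of equations).

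First I would record the algebraic content of the Hopf condition. Since $JN=\cos\theta\,e_1+\sin\theta\,e_2$, it is convenient to rotate to $\tilde{e}_1=\cos\theta\,e_1+\sin\theta\,e_2=JN$ and $\tilde{e}_2=-\sin\theta\,e_1+\cos\theta\,e_2$. Requiring that $JN$ be an eigenvector of the shape operator $A_N$, with principal curvature $\lambda$, gives — exactly as in the proof of \Cref{thm:hopf_cst_angle_is_horizontal} — the relations $\alpha_{11}=\lambda-\alpha_{12}\tan\theta$, $\alpha_{22}=\lambda-\alpha_{12}\cot\theta$ and $\alpha_{1i}=-\alpha_{2i}\tan\theta$ for $i=3,4,5$, where $\alpha_{ij}=g(A_N e_i,e_j)$. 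The new feature compared with the constant-angle situation is that \Cref{thm:frame_non_horizontal} already expresses three further components through $\theta$, namely $\alpha_{i3}=e_i(\theta)+\tfrac12\delta_{i5}$; in particular the $i=3$ Hopf relation becomes $e_1(\theta)=-\tan\theta\,e_2(\theta)$, linking the $e_1$- and $e_2$-components of the gradient of $\theta$ on $\mathcal{H}$, while $\alpha_{33},\alpha_{43}$ are derivatives of $\theta$ and $\alpha_{53}=e_5(\theta)+\tfrac12$.

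Next I would compute all connection coefficients $\nabla_{e_i}e_j$ of the frame, hence the brackets $[e_i,e_j]$, from the explicit formulas for $\nabla\Jo$, $G=\nabla J$ and $\nabla P=-\nabla(J\Jo)$, together with the fact that nearly Kähler $\CP^3$ is of constant type (so $\nabla G$ is the fully explicit tensor used in the proof of \Cref{thm:frame_non_horizontal}). Feeding the known curvature tensor of nearly Kähler $\CP^3$ (the tensor $R^a$ specialised to $a=2$) into the Codazzi equation $g(R(e_i,e_j)e_k,N)=g((\nabla^\perp h)(e_i,e_j,e_k)-(\nabla^\perp h)(e_j,e_i,e_k),N)$ for a well-chosen list of triples — the analogues of $(e_i,\tilde{e}_1,\tilde{e}_1)$, $(\tilde{e}_2,\tilde{e}_1,\tilde{e}_1)$, $(e_4,\tilde{e}_2,\tilde{e}_1)$, $(e_4,e_5,\tilde{e}_1)$, $(e_5,\tilde{e}_2,\tilde{e}_1)$ and $(\tilde{e}_2,e_4,e_5)$ that closed the constant-angle case, but now with the derivatives of $\theta$ kept as unknowns — produces an overdetermined first-order system for $\theta$, $\lambda$ and the few remaining free components $\alpha_{12},\alpha_{24},\alpha_{25},\alpha_{44},\alpha_{45},\alpha_{55}$. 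Imposing on $\theta$ and on $\lambda$ the compatibility conditions $c_{ij}(f)=[e_i,e_j]f-(e_i(e_j(f))-e_j(e_i(f)))$ — the same mechanism that finished the proof of \Cref{thm:hopf_cst_angle_is_horizontal} — should then force $e_i(\theta)=0$ for every $i$; the Gauss equation can supply additional relations among the $\alpha_{ij}$ to eliminate unknowns beforehand if that is expedient.

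The step I expect to be the genuine obstacle is this last one. Extracting $\dd\theta=0$ from the system is not formal: it will take identifying the right linear combinations of the Codazzi identities and compatibility conditions, and very likely a case split according to whether certain components of $\dd\theta$ — or of the free $\alpha_{ij}$ — vanish, in the spirit of the dichotomy ``$e_1(\lambda)\neq 0$ versus $e_1(\lambda)=0$'' used in the constant-angle argument. Once $\theta$ is known to be locally constant, \Cref{thm:hopf_cst_angle_is_horizontal} applies and contradicts the standing assumption that $\mathcal{H}$ is non-horizontal; hence no non-horizontal Hopf hypersurface exists, and every Hopf hypersurface of nearly Kähler $\CP^3$ is horizontal.
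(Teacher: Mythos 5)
Your overall strategy coincides with the paper's: work in the frame of \Cref{thm:frame_non_horizontal} on the putative non-horizontal open set, translate the Hopf condition into the relations on the $\alpha_{ij}$ (your identities, including $e_1(\theta)=-\tan\theta\,e_2(\theta)$ from $\alpha_{13}=-\tan\theta\,\alpha_{23}$, are correct), run the Codazzi equations and the compatibility conditions $c_{ij}$, force $\theta$ to be constant, and then conclude with \Cref{thm:hopf_cst_angle_is_horizontal}. The problem is that your write-up stops exactly where the proof has to happen. You yourself flag the extraction of ``$e_i(\theta)=0$ or a numerical contradiction'' from the overdetermined system as the expected obstacle, and you leave it at ``should then force'' and ``very likely a case split''. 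That is a genuine gap: there is no a priori reason the system closes up in the way you hope, and in fact it does not deliver $e_i(\theta)=0$ as such --- what the computation actually yields is that $\theta$ is forced to take specific constant values, after a nontrivial case analysis. Without exhibiting the explicit chain of identities, the proposal is a plan rather than a proof.

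For comparison, here is how the paper closes that gap. The Codazzi equations applied to $(e_i,\tilde{e}_1,\tilde{e}_1)$ and $(\tilde{e}_2,\tilde{e}_1,\tilde{e}_1)$ still give \eqref{eq:derivatives_lambda}, so all derivatives of $\lambda$ are controlled by $e_1(\lambda)$ and $\theta$; the compatibility condition $c_{45}(\lambda)$ then yields the dichotomy $e_1(\lambda)=0$ or $\alpha_{55}=-\alpha_{44}$. In the first branch, $c_{14}(\lambda)$ forces $\theta$ constant, which \Cref{thm:hopf_cst_angle_is_horizontal} excludes. In the second branch, Codazzi at $(e_4,JN,e_4)$ and $(e_5,JN,e_5)$ gives $\alpha_{34}=-\alpha_{25}\sec\theta$, the remaining conditions $c_{23}(\lambda), c_{25}(\lambda), c_{34}(\lambda), c_{24}(\lambda), c_{14}(\lambda)$ determine $\alpha_{33},\alpha_{35},\alpha_{25},\alpha_{24}$ in terms of $e_1(\lambda)$ and $\theta$, and $c_{35}(\lambda)$ produces a relation of the form $e_1(\lambda)^2=f(\theta)^2$; differentiating this along $e_5$ and comparing with $c_{15}(\lambda)$ shows that $e_1(\lambda)=\pm f(\theta)$ is possible only for finitely many constant values of $\theta$, again excluded by \Cref{thm:hopf_cst_angle_is_horizontal}. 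So the reduction to the constant-angle proposition and the frame bookkeeping you describe are the easy part; the case split driven by $c_{45}(\lambda)$ and the final differentiation argument are the actual content your proposal is missing.
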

\begin{proof}
    We prove this via contradiction, and work in the frame of \Cref{thm:frame_non_horizontal}.
    Just as in \Cref{thm:hopf_cst_angle_is_horizontal}, \Cref{eq:derivatives_lambda} holds. Again, we consider the compatibility conditions $c_{ij}$ for $\lambda$ as introduced in the proof of \Cref{thm:hopf_cst_angle_is_horizontal}.
    From $c_{45}(\lambda)$, we get that either $e_1(\lambda) = 0$, or that $\alpha_{55} = - \alpha_{44}$. The former, together with $c_{14}(\lambda)$ implies that $\theta$ is constant, which cannot be the case, due to \Cref{thm:hopf_cst_angle_is_horizontal}. Hence, we know that $\alpha_{55} = - \alpha_{44}$. Then, combining the Codazzi equation applied to $(e_4, JN, e_4)$ and $(e_5, JN, e_5)$, we get $\alpha_{34} = -\alpha_{25} \sec\theta $. Then, $c_{23}(\lambda), c_{23}(\lambda), c_{25}(\lambda), c_{34}(\lambda), c_{24}(\lambda)$ and $c_{14}(\lambda)$ gives $\alpha_{33}$, $\alpha_{35}$, $\alpha_{25}$ and $\alpha_{24}$ in terms of $e_1(\lambda)$ and $\theta$ (and $\alpha_{12}$, $\alpha_{13}$ for $\alpha_{33}$). Then, we find that $c_{35}(\lambda)$ gives $e_1(\lambda)^2 = f(\theta)^2$ for some function that only depends on $\theta$. Of course, only $e_1(\lambda) = \pm f(\theta)$ satisfies this equation. We will now show that this solution is not possible.

    Differentiating $e_1(\lambda)^2 = f(\theta)^2$ with respect to $e_5$, we get $2 e_1(\lambda) e_5(e_1(\lambda)) = 2 f(\theta) f'(\theta) e_5(\theta)$.
    Combining this equation with the condition of $c_{15}(\lambda)$, we find that $e_1(\lambda) = \pm f(\theta)$ is only a solution for specific constant values of $\theta$. Now we are done, thanks to \Cref{thm:hopf_cst_angle_is_horizontal}.
\end{proof}

With the previous two results, we are ready to classify all Hopf hypersurfaces in nearly Kähler $\CP^3$. 

\begin{theorem}\label{thm:classification_Hopf}
	Suppose $\mathcal{H}$ is a Hopf hypersurface of nearly Kähler $\CP^3$, then $\mathcal{H}$ is locally congruent to a member of the family of \Cref{eq:family_Hopf_hypersurfaces_intro}.
\end{theorem}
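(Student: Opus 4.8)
The plan is to feed \Cref{thm:hopf_is_horizontal} into the twistor picture over $\sphere{4}$. By \Cref{thm:hopf_is_horizontal}, a Hopf hypersurface $\mathcal{H}$ of nearly Kähler $\CP^3$ is automatically horizontal, so by \Cref{thm:characterisation_horizontal} it is, locally, the twistor pre-image $\tau^{-1}(M)$ of a hypersurface $M\subset\sphere{4}$. Along $\mathcal{H}$ the differential $\dd\tau$ restricts to an isometry $\Dfour\to T\sphere{4}$ carrying the horizontal frame fields $e_3=JN$, $e_4$, $e_5$ of \Cref{thm:frame_horizontal} to an orthonormal frame of $M$ and the unit normal $N$ to a unit normal $n$ of $M$, while $e_1,e_2\in\Dtwo$ span the tangent space to the twistor fibre. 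Since $\tau$ is a Riemannian submersion and its fibres lie inside $\mathcal{H}$, for horizontal tangent fields $X,Y$ of $\mathcal{H}$ one has $g(\nabla_XY,N)=\langle\nabla^{\sphere{4}}_{\dd\tau X}\dd\tau Y,n\rangle$, the vertical part of $\nabla_XY$ being $g$-orthogonal to the horizontal $N$. Hence the $\Dfour$-block of the shape operator $A_N$ descends to the shape operator $A^M$ of $M$, and the Hopf condition $A_N(JN)=\lambda JN$ (which in particular forces $g(A_Ne_3,e_1)=g(A_Ne_3,e_2)=0$) descends to $A^M(\dd\tau e_3)=\lambda\,\dd\tau e_3$.

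The crucial point is that $\mathcal{H}=\tau^{-1}(M)$ contains the whole twistor fibre over each $x\in M$, so this eigenvector relation holds at every point of that fibre. Under the tautological identification, a point $\zeta\in\tau^{-1}(x)$ is an orthogonal, orientation-compatible complex structure $J_\zeta$ on $T_x\sphere{4}\cong\reals^4$, and $\dd\tau$ intertwines the ambient almost complex structure with $J_\zeta$, so $\dd\tau(e_3)_\zeta=\dd\tau(JN)_\zeta=J_\zeta(n)$. Writing $T_x\sphere{4}\cong\quaternions$ with $n$ a unit quaternion, the fibre $\tau^{-1}(x)\cong\CP^1$ is the two-sphere of unit imaginary quaternions acting by left multiplication, and $\{J_\zeta(n):\zeta\in\tau^{-1}(x)\}=\{\,vn : v \text{ a unit imaginary quaternion}\,\}$ is exactly the unit sphere of $n^\perp=T_xM$. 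Therefore every unit tangent vector of $M$ at $x$ is an eigenvector of $A^M_x$, so $A^M_x$ is a scalar multiple of the identity: $M$ is totally umbilical in $\sphere{4}$. A connected totally umbilical hypersurface of the round $\sphere{4}$ is a geodesic three-sphere (including the totally geodesic great $\sphere{3}$), so $M$ is such a sphere.

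It remains to identify $\mathcal{H}=\tau^{-1}(M)$ with a member of \Cref{eq:family_Hopf_hypersurfaces_intro}. Lifting through the Hopf fibration, $\tau\circ\pi\colon\sphere{7}\subset\quaternions^2\to\sphere{4}\cong\quaternions P^1$ is $(p,q)\mapsto[(p,q)]$, and every geodesic three-sphere of $\quaternions P^1$ is, after an isometry of $\sphere{4}$, a level set of $[(p,q)]\mapsto\norm{q}^2/(\norm{p}^2+\norm{q}^2)$, say at value $\sin^2 t$ with $t\in(0,\pi/2)$; its pre-image in $\sphere{7}$ is precisely $\lift{\mathcal{H}}_t=\psi_t(\sphere{3}\times\sphere{3})$ from the immersion of \Cref{eq:family_Hopf_hypersurfaces}. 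Since the identity component $\mathrm{SO}(5)$ of $\Iso(\sphere{4})$ is covered by $\Sp(2)/\{\pm1\}$, which acts on $\CP^3$ by nearly Kähler isometries preserving $\tau$, the ambient congruence from $M$ to the standard geodesic sphere lifts to a congruence carrying $\mathcal{H}$ onto $\mathcal{H}_t=\pi(\lift{\mathcal{H}}_t)$. Hence $\mathcal{H}$ is locally congruent to a member of the family, as claimed.

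I expect the real obstacle to be the middle paragraph: one must justify carefully that along a horizontal hypersurface $\dd\tau$ really does intertwine the almost complex structure with the tautological complex structure $J_\zeta$ attached to the twistor point, and that $v\mapsto vn$ sweeps out the full unit tangent sphere of $M$ as $\zeta$ runs over the $\CP^1$-fibre — this is exactly where the defining feature of the twistor space (its fibres parametrising all compatible complex structures) is used in an essential way. A purely computational alternative is also available: staying in the frame of \Cref{thm:frame_horizontal} and inserting the Hopf condition ($\theta\equiv 0$) into the Gauss and Codazzi equations, as in the proofs of \Cref{thm:hopf_cst_angle_is_horizontal,thm:hopf_is_horizontal}, one would show directly that $A_N=\operatorname{diag}(\mu,\mu,\lambda,\nu,\nu)$ with $\mu,\nu,\lambda$ constant, and then match these constants to the principal curvatures computed in \Cref{sec:examples}; the twistor route is shorter but replaces the computation by the geometric input just described.
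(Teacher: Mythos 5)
Your argument is correct in outline but takes a genuinely different route from the paper. After the common reduction to the horizontal case via \Cref{thm:hopf_is_horizontal}, the paper stays in the frame of \Cref{thm:frame_horizontal}: the Codazzi equations force $\alpha_{44}=\alpha_{55}=\alpha_{33}$, $\alpha_{45}=0$ and $\lambda$ constant, and the hypersurface is then transferred to the Fubini--Study metric, where its constant principal curvatures (with multiplicities $1,2,2$) place it, via the Takagi/Kimura classification of Hopf hypersurfaces with constant principal curvatures in Kähler $\CP^3$, among the tubes over a totally geodesic $\CP^1$, which are matched to the examples of \Cref{sec:examples}. You instead descend through the twistor fibration: the horizontal block of $A_N$ projects to the shape operator of $M=\tau(\mathcal{H})\subset\sphere{4}$, the Hopf condition says that $J_\zeta n$ is an eigenvector of $A^M_x$ for each twistor point $\zeta$ over $x$, and since these directions sweep out the unit tangent sphere of $M$ at $x$, the operator $A^M_x$ is a multiple of the identity, so $M$ is an open part of a geodesic $\sphere{3}$ and $\mathcal{H}$ is its pre-image. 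Your route is more conceptual, avoids both the Codazzi computation and the appeal to the external Kähler classification, and proves item (6) of \Cref{thm:main_theorem_NK_CP3} directly; the paper's route uses only machinery it has already set up and produces the relation $\lambda=2\cot(2t)$ explicitly.

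Two points in your middle paragraph need care. First, as you yourself flag, one must verify that along $\Dfour$ the differential $\dd\tau$ intertwines $J$ (equivalently $\Jo$, since $P=\identity$ on $\Dfour$) with the tautological complex structure $J_\zeta$, and that $\zeta\mapsto J_\zeta n$ covers the whole unit sphere of $T_xM$ as $\zeta$ runs over the fibre; both statements are true and can be checked in the paper's quaternionic model of $\tau\circ\pi$, but they are nowhere stated in the paper, so a short computation must be supplied. Second, the conclusion is only local, and a horizontal hypersurface is a priori a union of open pieces of twistor fibres rather than of entire fibres, so strictly you only obtain an open set of unit eigendirections of $A^M_x$, not all of them. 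This is harmless: the unit eigenvectors of a symmetric endomorphism of $\reals^3$ form a union of at most three unit spheres of proper eigenspaces unless the endomorphism is a multiple of the identity, and such a union has empty interior in the $2$-sphere; but the phrase ``contains the whole twistor fibre'' should be replaced by this openness argument (or by a saturation argument) for the proof to be complete.
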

\begin{proof}
	By \Cref{thm:hopf_is_horizontal}, we only need to consider $\theta = 0$. We use the frame of \Cref{thm:frame_horizontal}. Assume that $\mathcal{H}$ is Hopf, so that $A_N JN = \lambda JN /\sqrt{2}$ for some function $\lambda$. The factor $\sqrt{2}$ is convenient later on. We have the following conditions on the second fundamental form: $\alpha_{33} =  \lambda/\sqrt{2}$, $\alpha_{34}=0$, and $\alpha_{35}=0$. Then, we consider the Codazzi equation for $(e_i, e_j, e_k)$ with $(i,j,k)=(3,1,4),(3,1,5),(3,2,5)$, to find that $\alpha_{44}=\alpha_{55} = \lambda/\sqrt{2}$ and $\alpha_{45}=0$. The remaining Codazzi equations then demand that $\lambda$ is a constant. 

    The second fundamental form is now completely known in terms of $\lambda$. We now transition from the nearly Kähler metric $g$ to the Kähler metric $\FSm$. We then compute the principal curvatures of the $\FSm$-second fundamental form, and find three eigenvalues: 
    $\lambda$, $\frac{1}{2} \l( \lambda + \sqrt{4+\lambda^2} \r)$ and $\frac{1}{2} \l( \lambda - \sqrt{4+\lambda^2} \r)$.
    Here, the last two have multiplicity two, and the first one has multiplicity one. By Theorem T of \Cite{kimura1989} (summarised from \Cite{takagi1973}), the hypersurface is locally congruent to either a tube over a totally geodesic $\CP^1$ or over a complex quadric. In any case, $\lambda = 2 \cot(2t)$. Then, the other principal curvatures become $-\tan t$ and $\cot t$, so that the hypersurface is locally congruent to a tube over a totally geodesic $\CP^1$ by the table in \Cite{takagi1975}. By the considerations of \Cref{sec:examples}, the hypersurface $\mathcal{H}$ is then locally congruent to a member of the family \Cref{eq:family_Hopf_hypersurfaces}, where $\lambda = 2 \cot(2t)$, after projection with $\pi$. 
\end{proof}

We extend the result of Hopf hypersurfaces of nearly Kähler $\CP^3$ as follows.

\begin{proposition}\label{thm:horizontal_hopf_for_both}
    Let $M$ be a horizontal hypersurface of $\CP^3$  and let $a \neq 1$. Suppose $M$ is Hopf for $(\CP^3, g_a, J)$ or for $(\CP^3, g_a, \Jo)$, then $M$ is locally congruent to a member of \Cref{eq:family_Hopf_hypersurfaces_intro}. 
\end{proposition}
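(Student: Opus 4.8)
The plan is to reduce the statement about $g_a$-Hopf horizontal hypersurfaces to the already-proven classification of Hopf hypersurfaces in nearly Kähler $\CP^3$ (\Cref{thm:classification_Hopf}). The key structural fact is that for a horizontal hypersurface all the relevant unit normals coincide up to a constant scaling: since $N \in \Dfour$ by \Cref{thm:characterisation_horizontal} (equivalently \Cref{thm:characterisation_special_values}), \Cref{thm:normal_ga_gNK} collapses to $N_a = N$ (the $PN = N$ case kills the $P$-dependent term), so there is a single normal direction to track across all the metrics $g_a$ and both complex structures $J$, $\Jo$. What changes between the metrics is only the Levi-Civita connection, via the explicit difference tensors $D$, $D^a$ recorded in \Cref{sec:description}, and hence the shape operators.

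First I would make the relation between the shape operators explicit. Writing $A^a$ for the $g_a$-shape operator of $N$ and $\FS A$ for the Fubini--Study one, the formula $\nabla^a_X Y = \FSnabla_X Y + D^a(X,Y)$ with $D^a(X,Y) = \tfrac{a-1}{a}\mathcal{P}_2 G(\Jo X,Y)$ gives $A^a_N X = \FS A_N X - \big(D^a(X,N)\big)^\top$, and similarly $A_N X = \FS A_N X - \big(D(X,N)\big)^\top$ for the nearly Kähler metric. Because $N \in \Dfour$ and $\Dfour$ is $\Jo$-invariant, $\Jo N \in \Dfour$, so I would check — exactly as was done for the examples in \Cref{sec:examples} — that the correction terms $D(\Jo N, N)^\top$ and $D^a(\Jo N, N)^\top$ both vanish. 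Consequently the statements "$\Jo N$ is a $g_a$-principal direction of $N$", "$\Jo N$ is a $g$-principal direction", and (since $JN = \Jo N$ in $\Dfour$) "$JN$ is a principal direction" are all equivalent to the single Fubini--Study condition that $\Jo N$ be an eigenvector of $\FS A_N$. The same computation handles the $(\CP^3, g_a, \Jo)$ case, where the relevant tangential vector is again $\Jo N \in \Dfour$, and the $(\CP^3, g_a, J)$ case, where $JN = \Jo N$. Thus the hypothesis of the proposition — $M$ Hopf for $(\CP^3, g_a, J)$ or for $(\CP^3,g_a,\Jo)$ — forces $M$ to be Hopf for the nearly Kähler $\CP^3$.

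Having reduced to a Hopf hypersurface of nearly Kähler $\CP^3$, I would invoke \Cref{thm:classification_Hopf} directly: $M$ is locally congruent to a member of the family \Cref{eq:family_Hopf_hypersurfaces_intro}. It remains only to observe that this conclusion is consistent with the starting data — the family members are themselves horizontal (as shown in \Cref{sec:examples}, their normals lie in $\Dfour$), so no further constraint is lost. I expect the only real work to be the bookkeeping in the second step: verifying that the tangential projections of the difference tensors evaluated on $(\Jo N, N)$ (and $(JN,N)$) genuinely vanish when $N \in \Dfour$, for both $D$ and $D^a$, and that the horizontal frame of \Cref{thm:frame_horizontal} is compatible across the metrics (which is already noted in \Cref{remark:orthogonality_for_all_ga}). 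The potential subtlety — and the place I would be most careful — is making sure that "Hopf for $g_a$" is not a strictly weaker condition than "Hopf for $g$" due to the eigenvalue (not just the eigendirection) possibly being affected; but since we only need the eigendirection $\Jo N$ to be preserved, and the correction term along $(\Jo N, N)$ vanishes outright, this causes no difficulty.
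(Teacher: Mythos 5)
Your proposal is correct and follows essentially the same route as the paper: use horizontality to identify the normals (all proportional, lying in $\Dfour$, with $JN_a=\Jo N_a$), conclude that the hypersurface is Hopf for nearly K\"ahler $\CP^3$, and then invoke \Cref{thm:classification_Hopf}. The only difference is that you spell out the vanishing of the difference-tensor corrections $D(\Jo N,N)^\top$ and $D^a(\Jo N,N)^\top$, which the paper leaves implicit.
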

\begin{proof}
    To prove this, we show that the assumption automatically leads to the hypersurface being Hopf for nearly Kähler $\CP^3$. Then, we invoke \Cref{thm:classification_Hopf} and are done.

    With the assumption that $M$ is horizontal, we know that for all $a$, the normal $N_a$ lies in $\Dfour$. Hence, $J N_a = \Jo N_a$. Moreover, for all $a$ and $b$, the normals $N_a$ and $N_b$ are proportional to each other. It follows that under the assumptions, the hypersurface is automatically also Hopf for nearly Kähler $\CP^3$ and this finishes the proof. 
\end{proof}

\begin{theorem}
    Let $M$ be a hypersurface of $\CP^3$. Suppose $M$ is Hopf for $(\CP^3, g_a, J)$ and $(\CP^3, g_a, \Jo)$ for any $a>0$, then $M$ is locally congruent to a member of \Cref{eq:family_Hopf_hypersurfaces_intro}. In particular, the family of examples of \Cref{eq:family_Hopf_hypersurfaces_intro} is unique among the Hopf hypersurfaces of Kähler $(\CP^3, g_1, \Jo)$, in that it is also Hopf for $(\CP^3, g_1, J)$ (with symmetry group reduced so that $J$ is well defined). 
\end{theorem}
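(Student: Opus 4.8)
The plan is to reduce the theorem to the classification of Hopf hypersurfaces of the nearly Kähler $\CP^3$ obtained in \cref{thm:classification_Hopf}. The first step is to observe that everything follows once we prove the implication: \emph{if $M$ is Hopf for both $(\CP^3,g_a,J)$ and $(\CP^3,g_a,\Jo)$ for some $a>0$, then $M$ is horizontal}. Indeed, granting horizontality we have $\theta=0$, so \cref{thm:normal_ga_gNK} gives $N_a=\sqrt{2/a}\,N$ (up to sign), which lies in $\Dfour$; hence $\Jo N_a=JN_a$ and $\Jo JN_a=-PN_a=-N_a$, so that $D(JN_a,N_a)=\tfrac12\mathcal{P}_2 G(\Jo JN_a,N_a)=\tfrac12\mathcal{P}_2 G(-N_a,N_a)=0$ and likewise $D^a(JN_a,N_a)=0$; consequently $\nabla^a_{JN_a}N_a=\nabla_{JN_a}N_a$, the principal-direction condition for $(\CP^3,g_a,J)$ is equivalent to the one for the nearly Kähler $\CP^3$, and \cref{thm:classification_Hopf} finishes the argument. (For $a\neq1$ this is exactly \cref{thm:horizontal_hopf_for_both}; the point is that the same reasoning is valid verbatim at $a=1$.) The ``in particular'' clause then follows from the $a=1$ case together with the fact, verified in \cref{sec:examples}, that the members of \cref{eq:family_Hopf_hypersurfaces_intro} are Hopf for both $(\CP^3,g_1,\Jo)$ and $(\CP^3,g_1,J)$.

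To prove the displayed implication, note that $a=2$ is immediate, since $(\CP^3,g_2,J)$ is the nearly Kähler $\CP^3$ and \cref{thm:hopf_is_horizontal} already gives horizontality. So I would assume $a\neq2$ and, for a contradiction, that $M$ is non-horizontal; then on the (non-empty open) set $\{0<\theta<\pi/2\}$ one may use the $g$-orthonormal frame $\{e_1,\dots,e_5\}$ of \cref{thm:frame_non_horizontal}, with $e_1\in\Dfour$ and $e_2\in\Dtwo$. Writing $PN=\cos(2\theta)N+\sin(2\theta)e_3$ and feeding this into \cref{thm:normal_ga_gNK} yields $N_a=\alpha_0N+\beta_0e_3$ for explicit functions $\alpha_0,\beta_0$ of $\theta$, with $\beta_0\neq0$ (as $a\neq2$); and a short computation gives $Je_3=\sin\theta\,e_1-\cos\theta\,e_2$, $\Jo e_3=\sin\theta\,e_1+\cos\theta\,e_2$, so that $JN_a$ and $\Jo N_a$ both lie in the plane $\Pi:=\Span\{e_1,e_2\}=\Span\{JN,\Jo N\}$, with $JN_a+\Jo N_a$ a nonzero multiple of $e_1$ and $JN_a-\Jo N_a$ a nonzero multiple of $e_2$ (nonzero, for otherwise $JN_a\parallel\Jo N_a$, forcing $\cos(2\theta_a)=\pm1$, i.e.\ $M$ horizontal or vertical, both excluded). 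Since $N_a$ is $g_a$-unit, $A^a_{N_a}X=-\nabla^a_XN_a$ is automatically $g_a$-tangent, so the two Hopf hypotheses say precisely that $A^a_{N_a}$ has $JN_a$ and $\Jo N_a$ as eigenvectors; adding and subtracting those two relations shows that $A^a_{N_a}$ maps $\Pi$ into itself.

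Next I would split according to the eigenvalues $\mu,\nu$ of $A^a_{N_a}$ along $JN_a,\Jo N_a$. If $\mu\neq\nu$, then as distinct-eigenvalue eigenvectors of a $g_a$-symmetric operator $JN_a$ and $\Jo N_a$ are $g_a$-orthogonal, so $\cos(2\theta_a)=g_a(JN_a,\Jo N_a)=0$: thus $M$ is $g_a$-isotropic, and by \cref{thm:link_ga_isotropic_constant_angle} it has \emph{constant} nearly Kähler angle $\theta=\tfrac12\arccos\tfrac{a-2}{a+2}\in(0,\pi/2)$, so all $e_i(\theta)$ vanish and every connection coefficient of $\nabla$ in $\{e_i\}$ is explicit from the nearly Kähler structure of constant type. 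If instead $\mu=\nu$, then $A^a_{N_a}$ is a multiple of the identity on $\Pi$, so $e_1,e_2$ (hence every vector of $\Pi$) are $g_a$-principal directions. In either case I would rescale $\{e_i\}$ to a $g_a$-orthonormal frame (legitimate by \cref{remark:orthogonality_for_all_ga}), compute the $g_a$-Levi-Civita connection from $\nabla^a=\nabla+\tfrac{a-2}{a}D$ together with $g_a=\tfrac{a+2}{4}g+\tfrac{a-2}{4}g(P\cdot,\cdot)$, and write out the Gauss and Codazzi equations for $g_a$ using the explicit curvature $R^a$ recalled in \cref{sec:description}. Imposing the eigenvector information on $A^a_{N_a}$ and, in the first subcase, constancy of $\theta$, and then running through the compatibility conditions for the relevant principal-curvature functions exactly as in the proofs of \cref{thm:hopf_cst_angle_is_horizontal,thm:hopf_is_horizontal}, one should be driven to an algebraic identity of the shape $c+c'\lambda^2=0$ with $c,c'>0$, which has no real solution. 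This contradicts non-horizontality and completes the proof.

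The step I expect to be the real obstacle is this last one. In contrast to the nearly Kähler (or Fubini--Study) curvature, $R^a$ has four separate summands, and one has to drag the conversions $N_a=\alpha_0N+\beta_0e_3$ and $g_a=\tfrac{a+2}{4}g+\tfrac{a-2}{4}g(P\cdot,\cdot)$, along with the explicit-but-bulky $g_a$-connection coefficients, through each Codazzi identity, so the computation is appreciably heavier than in \cref{thm:hopf_cst_angle_is_horizontal,thm:hopf_is_horizontal}. One might hope to bypass it by showing outright that the two $g_a$-Hopf conditions force $A^g_N(JN)\parallel JN$ — i.e.\ that $M$ is already Hopf for the nearly Kähler $\CP^3$ — but the correction term $D(JN,N_a)\in\Dfour$ need not be a multiple of $e_1$ and the intrinsic derivative $\nabla^M_{JN}e_3$ need not lie in $\Pi$, so this identification is not transparent, and I would carry out the case split as described.
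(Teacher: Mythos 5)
Your setup is sound, and in one respect cleaner than the paper's: you obtain the key dichotomy --- either the two Hopf eigenvalues coincide, or the hypersurface is $g_a$-isotropic and hence, by \Cref{thm:link_ga_isotropic_constant_angle}, of constant nearly K\"ahler angle with $a=2\cot^2\theta$ --- from the $g_a$-orthogonality of eigenvectors of $A^a_{N_a}$ belonging to distinct eigenvalues, whereas the paper derives the same dichotomy as the explicit relation $(a-2\cot^2\theta)(\lambda-\mu)=0$ by writing both Hopf conditions out in the frame of \Cref{thm:frame_non_horizontal}. Your treatment of the horizontal case, including its extension to $a=1$, matches \Cref{thm:horizontal_hopf_for_both}, and your auxiliary identities ($N_a=\sqrt{2/a}\,N$ when $\theta=0$, $Je_3=\sin\theta\,e_1-\cos\theta\,e_2$, $\Jo e_3=\sin\theta\,e_1+\cos\theta\,e_2$, the expressions for $g_a$ and $\nabla^a$ in terms of $g$, $\nabla$, $P$, $D$) are all correct.

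The genuine gap is that the exclusion of the two non-horizontal cases --- which is the actual content of the theorem beyond \Cref{thm:horizontal_hopf_for_both} and \Cref{thm:classification_Hopf} --- is never carried out: you assert that running the Codazzi and compatibility conditions ``as in'' \Cref{thm:hopf_cst_angle_is_horizontal,thm:hopf_is_horizontal} should produce an identity $c+c'\lambda^2=0$ with $c,c'>0$, and you yourself flag this as the expected obstacle. In the paper the two cases close in genuinely different ways, and neither is quite of the shape you guess. In the case $\lambda=\mu$, Codazzi applied to $(e_3,e_5,e_2)$ gives $\alpha_{45}=0$; combining $(e_4,e_5,e_2)$, $(e_2,e_5,e_3)$, $(e_2,e_5,e_4)$, $(e_1,e_3,e_1)$ determines $\alpha_{35}$ and gives $\alpha_{34}=0$; then $(e_1,e_3,e_2)$ forces either $\theta$ constant (impossible with the data at hand) or the unforeseen sub-case $a=1/2$, in which $(e_2,e_3,e_4)$ and $(e_1,e_4,e_5)$ yield $\alpha_{44}=0$ and $\cos\theta=0$, a contradiction --- so the contradiction here is trigonometric, not quadratic in $\lambda$, and requires a sub-case split your outline does not anticipate. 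In the isotropic case ($a=2\cot^2\theta$, $\lambda\neq\mu$), constancy of $\theta$ fixes $\alpha_{33}$, $\alpha_{34}$, $\alpha_{35}$, $\alpha_{45}$; further Codazzi equations give $\alpha_{44}$, $\alpha_{55}$ and $\mu=\tfrac18\lambda\l(3-\cos 2\theta+6\sec^2\theta\r)$, and only after all components are expressed in $\theta$ and $\lambda$ do the equations coming from $(e_1,e_2,e_5)$ and $(e_4,e_1,e_5)$ form a linear system in $\lambda^2$ whose unique solution is negative. Since these eliminations are the heart of the proof and nothing in your proposal verifies that they terminate in a contradiction (rather than, say, in a consistent one-parameter family), the argument as written establishes the theorem only modulo this unexecuted computation.
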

\begin{proof}
    Due to \Cref{thm:horizontal_hopf_for_both}, we only need to show that there is no such hypersurface when we also assume that it is non-horizontal. Demanding that it is Hopf for $(\CP^3, g_a, J)$ with parameter $\lambda$ gives expressions for $\alpha_{11}, \alpha_{22}, \alpha_{13}, \alpha_{14}, \alpha_{15}$. Additionally demanding that it is Hopf for $(\CP^3, g_a, \Jo)$ with parameter $\mu$  gives expressions for $\alpha_{23}, \alpha_{24}, \alpha_{25}$ and $\alpha_{12}$, as well as the following equation: $(a - 2 \cot^2\theta)(\lambda- \mu) = 0$. Hence, we have two cases to consider: either $\lambda$ and $\mu$ are equal, or $\theta$ is constant and the hypersurface is $g_a$-isotropic according to \Cref{thm:link_ga_isotropic_constant_angle}.

    In the first case, it follows from the Codazzi applied to $(e_3,e_5,e_2)$ that $\alpha_{45}=0$. Combining the Codazzi equations for $(e_4,e_5,e_2), (e_2,e_5,e_3), (e_2,e_5,e_4), (e_1,e_3,e_1)$, we get an expression for $\alpha_{35}$ in terms of $a$ and $\theta$, and that $\alpha_{34}=0$. The Codazzi equation applied to $(e_1,e_3,e_2)$ then dictates that either $\theta$ is constant (which turns out to be impossible with the second fundamental form components calculated thus far) or that $a=1/2$. However, the Codazzi equations for $(e_2,e_3,e_4), (e_1,e_4,e_5)$ give that $\alpha_{44}=0$ and that $\cos \theta = 0$, which is a contradiction. 

    Hence, we need to have $a = 2 \cot^2 \theta$ and $\lambda\neq \mu$. In particular, $\theta$ is constant, from which it follows that $\alpha_{33}=\alpha_{34}=\alpha_{35}-1/2 = 0$. Furthermore, we also have $\alpha_{45} = 0$ from the Codazzi equation applied to $(e_3,e_5,e_2)$. From the Codazzi equation applied to $(e_4,e_5,e_2), (e_2,e_3,e_4), (e_1,e_3,e_4)$, we get expressions for $\alpha_{44}$ and $\alpha_{55}$, as well as $\mu = \frac{1}{8}\lambda (3 - \cos(2\theta) + 6 \sec^2 \theta )$. Since $\lambda \neq \mu$, we then also have that $\lambda \neq 0$. Now all components $\alpha_{ij}$ are expressed in terms of $\theta$ and $\lambda$. The system of equations obtained from applying the Codazzi equation to $(e_1,e_2,e_5)$ and $(e_4,e_1,e_5)$ is a linear system in $X = \lambda^2$. This has a unique solution for $X$, but it is negative, so that there are no real solutions for $\lambda$. Hence, we get a contradiction also in the final case, and we are done.
\end{proof}

\section{Extrinsically homogeneous hypersurfaces}\label{sec:extr_homo}

\begin{theorem}
    Let $a \neq 1$, then all extrinsically homogeneous hypersurfaces of $(\CP^3,g_a)$ are locally congruent to the examples of \Cref{eq:family_Hopf_hypersurfaces_intro}. In particular, they are all Hopf and horizontal. 
\end{theorem}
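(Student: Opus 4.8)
The plan is to reduce the problem to the already-established Hopf classification by exploiting the rigidity coming from the homogeneous structure together with the angle function $\theta_a$. First I would observe that an extrinsically homogeneous hypersurface $\mathcal{H}$ is the orbit of a point under a closed subgroup $G \subseteq \Iso(\CP^3, g_a)_\circ$. Since every isometry of $(\CP^3, g_a)$ (for $a \neq 1$) preserves the almost product structure $P$ by the results of \Cite{liefsoens2024} recalled in \Cref{sec:description}, the angle function $\theta_a$ is $G$-invariant; as $G$ acts transitively on $\mathcal{H}$, $\theta_a$ is \emph{constant}. By \Cref{thm:no_vertical_hypersurfaces} the value $\pi/2$ is excluded, so either $\theta_a \equiv 0$ (horizontal) or $\theta_a$ is a constant in $(0, \pi/2)$.

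Next I would handle the non-horizontal case, i.e. $\theta_a$ a constant in $(0, \pi/2)$. By \Cref{thm:link_ga_isotropic_constant_angle}, passing to the nearly Kähler metric $g = g_2$, the hypersurface $\mathcal{H}$ has \emph{constant nearly Kähler angle} $\theta$ with $\cos(2\theta) = (a-2)/(a+2)$, which is again a constant strictly between $0$ and $\pi/2$. So it suffices to rule out extrinsically homogeneous hypersurfaces of nearly Kähler $\CP^3$ with constant angle $\theta \in (0, \pi/2)$. Here I would combine two facts: (i) extrinsic homogeneity forces the shape operator $A_N$ to have \emph{constant principal curvatures} (the eigenvalues of $A_N$ are isometry-invariant), and moreover all the scalar invariants built from $h$, $\nabla h$, and $\theta$ are constant; (ii) in the non-horizontal frame of \Cref{thm:frame_non_horizontal} we already know $\alpha(e_i, e_3) = e_i(\theta) + \tfrac12\delta_{i5} = \tfrac12\delta_{i5}$ since $\theta$ is constant. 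I would then run the Codazzi equation in this frame exactly as in the proofs of \Cref{thm:hopf_cst_angle_is_horizontal} and \Cref{thm:hopf_is_horizontal}: the constancy of the principal curvatures plus the Codazzi identities should over-determine the components $\alpha_{ij}$ and ultimately yield an equation with no real solution (a relation of the form $2 + c\lambda^2 = 0$ with $c > 0$, as already appeared there). The cleanest route is probably to show the hypersurface would have to be Hopf — a constant-angle, constant-principal-curvature hypersurface should force $JN$ to be an eigendirection of $A_N$ — and then invoke \Cref{thm:hopf_cst_angle_is_horizontal} directly to get a contradiction with $\theta > 0$.

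Finally, in the horizontal case $\theta_a \equiv 0$, \Cref{thm:characterisation_horizontal} identifies $\mathcal{H}$ with the pre-image under the twistor fibration $\tau$ of a hypersurface $M \subset \sphere{4}$, and extrinsic homogeneity of $\mathcal{H}$ (with $G$ acting by isometries commuting with $P$, hence descending to isometries of $\sphere{4}$) forces $M$ to be an extrinsically homogeneous hypersurface of the round $\sphere{4}$ — that is, a geodesic sphere (or totally geodesic $\sphere{3}$). Pulling this back and comparing with \Cref{sec:examples}, where the examples $\mathcal{H}_t = \pi(\psi_t(\sphere{3}\times\sphere{3}))$ were shown to be exactly the pre-images of geodesic $3$-spheres in $\sphere{4}$, gives local congruence to a member of \Cref{eq:family_Hopf_hypersurfaces_intro}; the fact that these are Hopf and horizontal is then immediate from \Cref{sec:examples}. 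The main obstacle I anticipate is the non-horizontal case: while constancy of $\theta_a$ is immediate, squeezing a contradiction out of Codazzi requires knowing that extrinsic homogeneity gives enough constancy (principal curvatures, and the covariant derivatives of $h$ up to isometry) — establishing that homogeneity forces the Hopf condition, or otherwise pinning down the frame components, is the delicate point, though it parallels the computations already carried out for the Hopf case.
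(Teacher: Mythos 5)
Your opening step (the angle $\theta_a$ is an isometry invariant, hence constant on the orbit, and verticality is excluded) matches the paper, but the non-horizontal case is where your proposal has a genuine gap. You propose to ``run the Codazzi equation exactly as in the proofs of \Cref{thm:hopf_cst_angle_is_horizontal} and \Cref{thm:hopf_is_horizontal}'', but those computations start from the Hopf condition, which is precisely what you do not have here; and your suggested shortcut --- that constant angle plus constant principal curvatures ``should force'' $JN$ to be an eigendirection of $A_N$ --- is asserted without any argument and is not something the paper establishes (nor needs). What the paper actually uses is stronger than constancy of the principal curvatures: because the adapted frame of \Cref{thm:frame_non_horizontal} is built canonically from $N$, $J$, $\Jo$, $P$ and $G$, all of which are preserved by the isometries in the transitive group, the frame is equivariant and hence \emph{every component} $\alpha_{ij}$ is constant along $\mathcal{H}$. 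With that, no reduction to the Hopf case is needed: the Codazzi equation applied to just $(e_2,e_1,e_2)$ and $(e_5,e_3,e_5)$ gives $\alpha_{25}=0$ and $1+\alpha_{15}^2+\alpha_{45}^2+\alpha_{55}^2+\cos^2\theta-\alpha_{25}^2\cot^2\theta=0$, a sum of nonnegative terms equal to $-1$, which is the contradiction. Your proposal leaves exactly this computational content (the ``delicate point'' you flag yourself) unresolved.

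Your horizontal case takes a genuinely different route from the paper, but as written it is also incorrect: after descending to $\sphere{4}$ you claim an extrinsically homogeneous hypersurface of the round $\sphere{4}$ must be a geodesic sphere. The homogeneous hypersurfaces of $\sphere{4}$ also include the Clifford hypersurfaces $\sphere{1}(r)\times\sphere{2}(s)$ and the Cartan isoparametric hypersurface (the $\mathrm{SO}(3)$-orbit tube over the Veronese surface), so these must be excluded. This can be repaired --- e.g.\ the subgroup of $\Iso(\sphere{4})$ preserving such an $M$ has dimension at most $4$, and since isometries of $(\CP^3,g_a)$ correspond bijectively to isometries of $\sphere{4}$ via the twistor fibration, no closed subgroup can act transitively on the five-dimensional preimage $\tau^{-1}(M)$; equivalently, the isotropy cannot act transitively on the twistor fibre --- but this step is missing from your argument. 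The paper avoids the issue entirely: in the horizontal case it uses constancy of $\alpha_{33}=g(h(JN,JN),N)$ together with a handful of Codazzi equations in the frame of \Cref{thm:frame_horizontal} to conclude $\alpha_{34}=\alpha_{35}=\alpha_{45}=0$ and $\alpha_{33}=\alpha_{44}=\alpha_{55}$, i.e.\ that $\mathcal{H}$ is Hopf, and then invokes \Cref{thm:classification_Hopf}. If you patch the two gaps above (the explicit contradiction in the non-horizontal case and the exclusion of the non-spherical homogeneous hypersurfaces of $\sphere{4}$), your $\sphere{4}$-descent argument would be a valid alternative to the paper's horizontal-case computation.
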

\begin{proof}
    Since the isometry group for all $(\CP^3,g_a)$ ($a \neq 1$) is the same, we can consider all extrinsically homogeneous hypersurfaces of $(\CP^3,g_a)$ and only study them from the perspective of nearly Kähler $\CP^3$. Hence, let $\mathcal{H}$ be an extrinsically homogeneous hypersurface with $g$-unit normal $N$. Let $H$ be the closed subgroup of the isometry group of nearly Kähler $\CP^3$ such that $\mathcal{H}$ is the orbit of $H$. Let $\phii \in H$ be any isometry. Note that $\phii$ is orientation preserving, per definition. Hence, $\dd\phii J = J \dd\phii$. 
    For all isometries $\phii$ of NK $\CP^3$ we also have $P \dd\phii = \dd \phii P$, see \Cite{liefsoens2024}.
    We consider two cases.

    \textit{$\mathcal{H}$ is non-horizontal.}
    Suppose $\mathcal{H}$ is non-horizontal, and let $\tilde{N} = \dd \phii N$. For all $p\in \mathcal{H}$, we have
    \begin{align*}
        \cos\theta(\phii(p)) &= \cos\theta(\phii(p))\, \norm{ \dd\phii e_1}_{\phii(p)}
        = \norm{ \dd\phii \cos\theta(p) e_1}_{\phii(p)} \\
        &= \norm{ \dd\phii \mathcal{P}_2 J N}_{\phii(p)} 
        = \norm{ \mathcal{P}_2 J N}_{p} 
        = \cos\theta(p).
    \end{align*}
    Similarly, we have $\sin\theta(\phii(p)) = \sin\theta(p)$.
    We conclude that $\cos (\theta \circ \phii) = \cos \theta$ and $\sin (\theta \circ \phii) = \sin\theta$, for all $\phii \in H$. Hence, on each connected component of $\mathcal{H}$, $\theta$ is constant. 

    Now, fix some arbitrary $\phii \in H$, and define $\tilde{e}_i = \dd\phii e_i$. Note that since $\phii$ is an isometry, and $J$ is preserved under $\phii$, also $G = \nabla J$ is preserved. Hence, since also $\phii P = P \phii$; $\phii J = J \phii$ and $\phii \Jo = \Jo \phii$, we find that the frame $\tilde{e}_i$ is simply the frame $e_i$ with $N$ replaced by $\dd\phii N$. 

    Take $p,q \in \mathcal{H}$. Take an isometry $\phii \in H$ such that $\phii(p) = q$. 
    Then, we calculate
    \begin{align*}
        \alpha_{ij} (q) 
        &= \alpha_{ij} (\phii(p)) 
        = g_{\phii(p)} \l( \eval{\nabla_{ e_i (\phii(p)) } e_j}_{\phii(p)} , \dd\phii_p N_p \r) \\
        &= g_{\phii(p)} \l( \eval{\nabla_{ \dd \phii_p e_i (p) } \dd \phii_p e_j}_{\phii(p)} , \dd\phii_p N_p \r) 
        = g_{p} \l( \eval{\nabla_{ e_i (p) } e_j}_{p} , N_p \r) 
        = \alpha_{ij}(p).
    \end{align*}
    As such, all functions $\alpha_{ij}$ are constant on $\mathcal{H}$. 

    Then, we consider the Codazzi equation applied to $(e_2,e_1,e_2)$ and $(e_5,e_3,e_5)$ to obtain the equations $\alpha_{25} = 0$ and $1 + \alpha_{15}^2 + \alpha_{45}^2 +\alpha_{55}^2 +\cos^2\theta - \alpha_{25}^2 \cot^2 \theta = 0$. Hence, the Codazzi equation is never satisfied, and an extrinsically homogeneous manifold cannot be non-horizontal.

    \textit{$\mathcal{H}$ is horizontal.}
    With the same argument as in the previous case, we can prove that if $\mathcal{H}$ is an extrinsically homogeneous horizontal hypersurface, that then $g(h(JN, JN),N)$ is constant on the manifold. 
    From the Codazzi equation applied to $(e_3, e_1, e_3)$; $(e_3, e_2, e_3)$; $(e_3, e_1, e_5)$; $(e_3, e_1, e_4)$; and $(e_3, e_2, e_5)$, we get the following system of equations: $4 \alpha_{34} = e_1(\alpha_{33})$; $4 \alpha_{35} = -e_2(\alpha_{33})$; $2 \alpha_{45}  = e_1(\alpha_{34}) + \alpha_{34} \, g( \nabla_{e_1} e_4, e_5)$; $-2 \alpha_{33} + 2 \alpha_{44}  = e_1(\alpha_{34}) - \alpha_{35} \, g( \nabla_{e_1} e_4, e_5)$; and $2 \alpha_{33} - 2 \alpha_{55}  = e_2(\alpha_{35}) + \alpha_{34} \, g( \nabla_{e_2} e_4, e_5)$.
    Since $g(h(JN, JN),N) = \alpha_{33}$ is a constant, we find $\alpha_{34}=\alpha_{35} = 0$ so that $\alpha_{45} = 0$, and $\alpha_{33} = \alpha_{44} = \alpha_{55}$. It follows that $A_N JN = \alpha_{33} JN$, so that $\mathcal{H}$ is Hopf. The proof is finished by invoking \Cref{thm:classification_Hopf}.
\end{proof}

\section{Proof of \texorpdfstring{\Cref{thm:main_theorem_NK_CP3}}{Theorem A} }
    In this section, we finish the proof of \Cref{thm:main_theorem_NK_CP3}. Together with the results of \Cref{sec:extr_homo,sec:hopf_hypersurfaces}, the only things left to show is that $\phi A + A \phi = 0$ implies $\mathcal{H}$ is Hopf with $A_N JN = 0$; and that a horizontal hypersurface with constant principal curvature has to be Hopf.

    For the former, suppose $\phi A + A \phi = 0$. Then, for $X = A_N JN$, we get $0 = (JX)^\top$. Hence, $JX = -b N$ for some $b$. In other words, $X = b J N$, or $\mathcal{H}$ is Hopf. By \Cref{thm:classification_Hopf}, we can easily check that in this case $b= 0$. 

    For the latter, take the frame of \Cref{thm:frame_horizontal} and let again $\alpha_{ij} = g(h(e_i, e_j),N)$. For a horizontal hypersurface with constant principal curvature, we can take the determinant of the second fundamental form, to obtain $\alpha_{33}/16$. Since the principal curvatures are constant, so is their product, and hence $\alpha_{33}$ is constant. From the Codazzi equation applied to $(e_1, e_3, e_3)$ and $(e_2, e_3, e_3)$, we find $\alpha_{34}=\alpha_{35}=0$, so that $\mathcal{H}$ is Hopf.

\section{Non-existence results}\label{sec:non_existence}

As stated in the introduction, hypersurfaces of nearly Kähler $\CP^3$ have been studied in \Cite{deschamps2021}, where they proved that totally geodesic hypersurfaces and totally umbilical hypersurfaces do not exist. In this section, we extend this by proving that Codazzi hypersurfaces do not exist for all homogeneous metrics on $\CP^3$. Furthermore, we also extend their result by proving that totally umbilical hypersurfaces are not possible in any $(\CP^3,g_a)$. 

\begin{theorem}
    There exists no Codazzi hypersurface in $(\CP^3,g_a)$. In particular, there are no parallel or totally geodesic hypersurfaces.
\end{theorem}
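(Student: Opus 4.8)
The plan is to turn the Codazzi condition into a vanishing statement for the normal part of the ambient curvature and then contradict it with a short computation in the explicit curvature tensor $R^a$ of $(\CP^3,g_a)$ recorded in \Cref{sec:description}. By the Codazzi equation \Cref{eq:Codazzi}, a hypersurface $\mathcal{H}$ with unit normal $N_a$ is Codazzi if and only if $g_a(R^a(X,Y)Z,N_a)=0$ for all vector fields $X,Y,Z$ tangent to $\mathcal{H}$; hence it suffices to exhibit, for an arbitrary hypersurface, a tangent triple on which this normal component does not vanish. A first simplification: feeding three \emph{tangent} vectors into $R^a$, every summand that is of ``wedge type'' in the first slot (the $\tfrac{(a-1)(a+2)}{a^2}$-term, the first summand $(X\wedge_a Y)PZ$ of the $\tfrac{1-a}{a}$-block, and the leading wedge terms $(X\wedge_a Y)Z$ of the $\tfrac1a$- and $\tfrac{1-a}{a^2}$-blocks) contributes nothing to the $N_a$-component, since $g_a(X,N_a)=g_a(Y,N_a)=0$. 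What survives is: from the $\tfrac1a$-block the terms in $g_a(\,\cdot\,,\Jo N_a)$; from the $\tfrac{1-a}{a^2}$-block the terms in $g_a(\,\cdot\,,JN_a)$; and from the $\tfrac{1-a}{a}$-block the two summands $P(X\wedge_a Y)Z$ and $P(X\wedge_a Y)PZ$, which contribute via $g_a(\,\cdot\,,PN_a)$. All of this is governed by how $\Jo$, $J$ and $P$ act on the triple relative to $N_a$, i.e. by the orthogonal frames of \Cref{thm:frame_horizontal,thm:frame_non_horizontal} (orthogonal for every $g_a$ by \Cref{remark:orthogonality_for_all_ga}).

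For $a=1$ this is classical: $(\CP^3,g_1)$ has constant holomorphic sectional curvature $4$, and taking a $g_1$-unit tangent vector $X$ orthogonal to both $N$ and $\Jo N$ — which exists since $\dim\CP^3=6$, and then $\Jo X$ is automatically tangent — one computes directly from the $a=1$ form of the curvature that $g_1(R^1(\Jo N,X)\,\Jo X,N)=-1\neq 0$, so the reformulated condition already fails. For $a\neq 1$, \Cref{thm:no_vertical_hypersurfaces} rules out vertical hypersurfaces, leaving two cases. If $\mathcal{H}$ is horizontal then $N_a$ is proportional to the nearly Kähler normal $N$, one works in the frame of \Cref{thm:frame_horizontal} rescaled to be $g_a$-orthonormal, and since now $PN_a=N_a$ the whole $\tfrac{1-a}{a}$-block also drops out; a direct evaluation of what is left gives $g_a\big(R^a(e_3,e_4)e_5,N_a\big)=-a^{-2}\neq 0$. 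If $\mathcal{H}$ is non-horizontal, one works in the frame of \Cref{thm:frame_non_horizontal} together with the expression for $N_a$ in terms of $N$ from \Cref{thm:normal_ga_gNK}, substitutes into the surviving part of $R^a$ and evaluates on a suitable frame triple; here the $\tfrac{1-a}{a}$-block is nontrivial (as $PN_a$ is no longer proportional to $N_a$), and the outcome is a rational function of $a$ and of $\cos 2\theta$ with no zero in the admissible range, again a contradiction. Since none of these computations assumes anything about $\mathcal{H}$ beyond being a hypersurface, no $(\CP^3,g_a)$ admits a Codazzi hypersurface; in particular it has no parallel hypersurface (for which $\nabla h=0$, so $\nabla h$ is trivially totally symmetric) and no totally geodesic one (for which $h=0$).

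The main obstacle is the bookkeeping in the $a\neq 1$ computation: $R^a$ has many terms, each wedge $\wedge_a$ is taken with respect to $g_a$ so the non-normalized frames force one to carry the norms $\|e_i\|_a^2$ (which differ on $\Dtwo$ and on $\Dfour$), and in the non-horizontal case the frame mixes the $\Dtwo$- and $\Dfour$-parts of $N$. The observation that, on a tangent triple, only the $\Jo$-, $J$- and $g_a(\,\cdot\,,PN_a)$-contributions can survive is what keeps the check manageable, and choosing the triple so that as few of these as possible are nonzero is the decisive step; it is most safely carried out with the help of a symbolic computation.
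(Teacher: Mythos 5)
Your proposal is correct and follows essentially the same route as the paper's proof: rewrite the Codazzi condition as the vanishing of $g_a(R^a(X,Y)Z,N_a)$ for all tangent $X,Y,Z$, then contradict it on the frames of \Cref{thm:frame_horizontal,thm:frame_non_horizontal}, splitting into horizontal and non-horizontal cases with verticality excluded. Your horizontal value $-a^{-2}$ (in the $g_a$-orthonormal rescaling) is consistent with the paper's $-\tfrac{1}{2\sqrt{2a}}$ computed in the $g$-orthonormal frame, and the paper's non-horizontal step is exactly the kind of evaluation you describe, done on the triple $(e_1,e_4,e_5)$ with outcome $\cos\theta\big/\bigl(\sqrt{2a}\,\sqrt{2+a-(a-2)\cos(2\theta)}\bigr)\neq 0$.
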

\begin{proof}
    \textit{The hypersurface has to be horizontal.}
    \sloppy From the Codazzi-equation, we see that $g_a(R^a(X,Y)Z, N_a)$ has to vanish identically for all tangent vector fields $X,Y,Z$. Take $e_1, e_4, e_5$ from \Cref{thm:frame_non_horizontal}, we find 
    \[ 0 = g_a(R^a(e_1,e_4)e_5, N_a) = \frac{\cos \theta}{\sqrt{2a}\sqrt{2+a-(a-2)\cos(2 \theta)}}. \] Hence, if it exists, it has to be horizontal. 

    \textit{The hypersurface cannot be horizontal.}
    We consider again the Codazzi-equation, this time for $e_3, e_4, e_5$ from \Cref{thm:frame_horizontal}. We have $g_a(R^a(e_3,e_4)e_5, N_a) = -\frac{1}{2 \sqrt{2a}}$, which can never be zero. This concludes the proof.
\end{proof}

\begin{theorem}\label{thm:TU}
    There exists no totally umbilical hypersurface in $(\CP^3,g_a)$.
\end{theorem}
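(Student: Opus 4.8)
The plan is to derive a contradiction from the Codazzi equation \Cref{eq:Codazzi} by recycling the curvature computations of the previous proof. For the Fubini--Study metric ($a=1$) the statement is classical, as $\CP^n$ with $n\ge 2$ carries no totally umbilical real hypersurface; so I would assume $a\neq 1$ and suppose, for contradiction, that $\mathcal{H}\subset(\CP^3,g_a)$ is totally umbilical with $g_a$-unit normal $N_a$, i.e.\ $\alpha(X,Y)=\lambda\,g_a(X,Y)$ for some function $\lambda$ on $\mathcal{H}$.

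The first step is to specialise \Cref{eq:Codazzi}. Since $\nabla^a$ is metric, a one-line computation gives $g_a((\nabla^\perp h)(X,Y,Z),N_a)=(X\lambda)\,g_a(Y,Z)$, so the Codazzi equation becomes
\[
    g_a(R^a(X,Y)Z,N_a)=(X\lambda)\,g_a(Y,Z)-(Y\lambda)\,g_a(X,Z).
\]
The point I would exploit is that as soon as $X,Y,Z$ are three \emph{mutually $g_a$-orthogonal} tangent vector fields, the right-hand side vanishes identically, regardless of the unknown function $\lambda$. This is exactly the input needed to reuse the curvature values from the Codazzi non-existence proof, whose frame vectors are $g_a$-orthogonal by \Cref{remark:orthogonality_for_all_ga}.

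From here I would argue exactly as in that proof. If $\mathcal{H}$ is non-horizontal, work on an open subset where the frame of \Cref{thm:frame_non_horizontal} is valid (such a subset exists, using \Cref{thm:no_vertical_hypersurfaces} to rule out $\theta\equiv\pi/2$, and there $\cos\theta\neq 0$): the vectors $e_1,e_4,e_5$ are mutually $g_a$-orthogonal, so the displayed identity forces $g_a(R^a(e_1,e_4)e_5,N_a)=0$, contradicting the value $\tfrac{\cos\theta}{\sqrt{2a}\,\sqrt{2+a-(a-2)\cos(2\theta)}}\neq 0$ obtained there. Hence $\mathcal{H}$ must be horizontal; then with the frame of \Cref{thm:frame_horizontal} the mutually $g_a$-orthogonal vectors $e_3,e_4,e_5$ force $g_a(R^a(e_3,e_4)e_5,N_a)=0$, contradicting the value $-\tfrac{1}{2\sqrt{2a}}\neq 0$. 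Either way one gets a contradiction, so no totally umbilical hypersurface exists.

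I do not expect a genuine obstacle here, as the heavy curvature bookkeeping is inherited from the Codazzi theorem. The only subtlety is that the curvature values above were computed for the \emph{unnormalised} frame vectors, which are $g_a$-orthogonal but not $g_a$-unit; this causes no problem because the umbilic form of the Codazzi equation needs only orthogonality --- normalising would merely rescale both sides by a positive factor. The sole genuinely separate ingredient is the Fubini--Study case $a=1$, which falls outside the scope of the $g_a$-frames of \Cref{sec:angle} and is taken from the classical theory of real hypersurfaces of $\CP^n$.
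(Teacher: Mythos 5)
Your argument is correct, but it takes a genuinely different route from the paper. You run everything through the Codazzi equation \eqref{eq:Codazzi}: for an umbilical hypersurface $\alpha=\lambda\, g_a$ one gets $g_a(R^a(X,Y)Z,N_a)=(X\lambda)g_a(Y,Z)-(Y\lambda)g_a(X,Z)$, whose right-hand side vanishes on $g_a$-orthogonal triples independently of $\lambda$, so the nonzero curvature components $g_a(R^a(e_1,e_4)e_5,N_a)$ and $g_a(R^a(e_3,e_4)e_5,N_a)$ already displayed in the Codazzi-hypersurface theorem give the contradiction in the non-horizontal and horizontal cases respectively; your care about where the frame of \Cref{thm:frame_non_horizontal} is defined (using \Cref{thm:no_vertical_hypersurfaces} and continuity of $\theta$) and about the scale-invariance of the vanishing condition for the unnormalised but $g_a$-orthogonal frame (\Cref{remark:orthogonality_for_all_ga}) is exactly right. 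The paper instead argues via the Gauss equation in the non-horizontal case (umbilicity plus the Gauss equation on $e^a_1,e^a_5,e^a_3,e^a_4$ forces $\sin\theta=0$), and in the horizontal case uses that the component $g_a(h^a(e_1,e_5),N_a)$ is completely determined by the ambient structure once the normal is horizontal, equal to $\tfrac{2-2a+\sqrt{2a}\,a}{\sqrt{2a}\,a}\neq 0$, which is incompatible with umbilicity since $e_1\perp_a e_5$. Your version buys economy: no new curvature or structure-tensor computation is needed beyond what the preceding theorem already provides, and it makes transparent that the same curvature obstruction excludes both Codazzi and umbilical hypersurfaces; the paper's version buys independence from the Codazzi-theorem computations and, via the forced value of $\alpha^a(e_1,e_5)$, records a structural fact about horizontal hypersurfaces that is of separate interest. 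The only caveat is your treatment of $a=1$: outsourcing it to the classical non-existence of totally umbilical real hypersurfaces of the Fubini--Study $\CP^n$, $n\ge 2$, is legitimate, but you should state the reference explicitly (e.g. Tashiro--Tachibana) since the paper's frame machinery of \Cref{sec:angle} is set up for $a\neq 1$ and your proof, unlike the curvature formula for $R^a$, genuinely does not cover that case.
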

\begin{proof}
    \textit{The hypersurface has to be horizontal.}
    Recall from \Cref{remark:orthogonality_for_all_ga} that the frame of \Cref{thm:frame_non_horizontal} is $g_a$ orthogonal. We denote the renormalised frame by $e^a_i$. From the equation $h^a(X,Y) = g_a(X,Y) H_a$, we find that all components of the second fundamental form can be expressed in terms of one function $f$. From the Gauss equation applied to $e^a_1, e^a_5, e^a_3, e^a_4$, we find that $\sin \theta$ has to vanish. Hence, a totally umbilical hypersurface has to be horizontal.
    
    \textit{The hypersurface cannot be horizontal.}
    Consider $h^a(e_1, e_5) = \frac{2-2a+\sqrt{2a}a}{\sqrt{2a}a}$. Since $e_1, e_5$ are $g_a$ orthogonal, this has to be zero. However, there are no real solutions for $a$ that satisfy $h^a(e_1, e_5) = 0$ and hence, the hypersurface cannot be totally umbilical. This concludes the proof.
\end{proof}

Finally, we remark that there are also no hypersurfaces in nearly Kähler $\CP^3$ with constant sectional curvature. This has been proved recently in a broader context of the six-dimensional homogeneous nearly Kähler manifolds and will be published soon.
We extend the result for nearly Kähler $\CP^3$ to all homogeneous $(\CP^3,g_a)$.


\begin{theorem}
    There are no constant sectional curvature hypersurfaces in $(\CP^3,g_a)$.
\end{theorem}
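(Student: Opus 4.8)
The plan is to reduce, as in the previous two theorems, to the two cases of a non-horizontal and a horizontal hypersurface, and in each case extract a contradiction from the Gauss equation \eqref{eq:Gauss} together with the explicit curvature tensor $R^a$ recorded in \Cref{sec:description}. The overarching strategy is: if $\mathcal{H}$ had constant sectional curvature $c$, then $g_a(R^{\mathcal{H}}(X,Y)Z,W) = c\,g_a((X\wedge_a Y)Z,W)$ for every tangent frame, and \eqref{eq:Gauss} turns this into an algebraic identity relating $c$, the angle $\theta$, the metric parameter $a$, and the components $\alpha^a_{ij} = g_a(h^a(e^a_i,e^a_j),N_a)$ of the second fundamental form in the renormalised orthonormal frame $\{e^a_i\}$ (orthonormal by \Cref{remark:orthogonality_for_all_ga} after rescaling). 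There are only five frame vectors, so the number of independent Gauss components is small, and the point will be that the system is overdetermined.

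First I would handle the non-horizontal case using the frame of \Cref{thm:frame_non_horizontal}. The key observation is that constant sectional curvature forces, for any three distinct indices $i,j,k$, the relation $g_a(R^a(e^a_i,e^a_j)e^a_k,W) + \alpha^a_{ik}\alpha^a_{jW} - \alpha^a_{iW}\alpha^a_{jk} = 0$; choosing $W$ among the remaining frame vectors makes the $\alpha$-terms drop out entirely (since the Gauss correction $\alpha(X,W)\alpha(Y,Z)-\alpha(X,Z)\alpha(Y,W)$ vanishes when $X\wedge Y$ and $Z\wedge W$ involve four distinct orthonormal vectors, mirroring the Codazzi argument in the previous proof). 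So for every quadruple of distinct indices $\{i,j,k,\ell\}$ we get $g_a(R^a(e^a_i,e^a_j)e^a_k,e^a_\ell)=0$, a purely ambient condition. Plugging the frame vectors — built from $N$, $JN$, $\Jo N$, $PN$ and the $G$-images — into the displayed formula for $R^a$ and using the constant-type identities of \Cref{sec:description}, several such components become explicit nonzero rational functions of $\cos\theta$ and $a$ that admit no common zero on $\theta\in(0,\pi/2)$, $a>0$, $a\neq 1$; this rules out the non-horizontal case. (I expect the component $g_a(R^a(e_1,e_4)e_5,N_a)$, which already appeared nonzero in the Codazzi proof, or a close relative, to do the job, possibly after also using a "diagonal" Gauss equation to pin down one $\alpha^a_{ij}$.)

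Second, for the horizontal case I would use the frame of \Cref{thm:frame_horizontal}, where $\theta\equiv 0$, $e_1,e_2\in\Dtwo$, $e_3,e_4,e_5\in\Dfour$, and all the relevant structure tensors are known explicitly. Here the argument is cleanest: compute $g_a(R^a(e_3,e_4)e_5,N_a)$ (or $g_a(R^a(e_1,e_4)e_5,e_2)$, etc.) from the $R^a$ formula — this is a concrete constant depending only on $a$ — and observe it is nonzero for all admissible $a$, while constant sectional curvature together with the Gauss equation forces it to vanish exactly as in the non-horizontal step. This immediately excludes horizontal constant-curvature hypersurfaces and finishes the proof. The main obstacle is purely computational bookkeeping: assembling the frame vectors in terms of $N$ and substituting into the four-line expression for $R^a$, then simplifying with the constant-type relations to verify that at least one off-diagonal curvature component is nowhere-zero. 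There is no conceptual difficulty beyond choosing the frame quadruple that gives the most transparent nonvanishing expression, and the nearly Kähler ($a=2$) case being already known in the literature is a useful sanity check for the computation.
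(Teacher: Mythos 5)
There is a genuine gap at the heart of your strategy. You claim that for four distinct frame vectors the Gauss correction $\alpha(X,W)\alpha(Y,Z)-\alpha(X,Z)\alpha(Y,W)$ drops out, so that constant sectional curvature forces the purely ambient conditions $g_a(R^a(e^a_i,e^a_j)e^a_k,e^a_\ell)=0$. That is false: orthonormality of the frame makes the \emph{metric} terms $g_a(e^a_i,e^a_\ell)$ vanish, but says nothing about the off-diagonal components $\alpha^a_{i\ell}$ of the second fundamental form, which are unknown functions. What the Gauss equation actually gives for distinct indices is $g_a(R^a(e^a_i,e^a_j)e^a_k,e^a_\ell)=\alpha^a_{ik}\alpha^a_{j\ell}-\alpha^a_{i\ell}\alpha^a_{jk}$, a quadratic relation in the unknown $\alpha^a_{ij}$, not a contradiction-ready ambient identity. (You are conflating this with the Codazzi-hypersurface proof, where the hypothesis on $\nabla h$ really does kill the right-hand side.) The same confusion appears in your horizontal step: the component $g_a(R^a(e_3,e_4)e_5,N_a)$ has $W=N_a$ normal, so it sits in the Codazzi equation, not the Gauss equation, and constant sectional curvature imposes no condition on it whatsoever.

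Beyond the logical error, the shape of the paper's proof shows that no quick ambient non-vanishing argument is available here. The actual route is to differentiate the Codazzi equation, take a cyclic sum and use the Ricci identity; the constant-curvature hypothesis then eliminates the $R^{\mathcal H}$-terms on the right-hand side and yields a necessary condition that is \emph{linear} in $\alpha$ (\cref{eq:csc_ricci_identities}). Even then, for generic $a$ this condition only forces the hypersurface to be totally umbilical, so one must invoke \cref{thm:TU}; the values $a=1/2$ and $a=1$ need separate treatment, the latter via the Gauss--Codazzi equations and Takagi's classification (type A1). Your proposal also tacitly restricts to $a\neq 1$, whereas the statement covers the Fubini--Study case as well. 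To repair your approach you would have to treat the full Gauss system as a quadratic system in all $\alpha^a_{ij}$ and $c$ and show it has no solution, which is a substantially different (and much heavier) computation than the one you sketch.
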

\begin{proof}
    
    We start from the Codazzi-equation, i.e. \Cref{eq:Codazzi}, and differentiate this equation with respect to $U$. Since $\nabla^a_U N_a$ is tangent to the hypersurface; by taking the cyclic sum and by invoking the Ricci-identity, we obtain
    \begingroup
    \allowdisplaybreaks
    \begin{align*}
        & \mathfrak{S}_{U,X,Y} \Big(  g( (\nabla R)(U,X,Y,Z) , N_a) + \alpha(U,X) g( R(N_a,Y)Z , N_a) \\ &
        + \alpha(U,Y)  g( R(X,N_a)Z , N_a)+ \alpha(U,Z)  g( R(X,Y)N_a , N_a) \Big)
        \\ & + g( R(X,Y)Z , \nabla_U N_a) \\
        &= \mathfrak{S}_{U,X,Y} \Big( g( (\nabla^2 h)(U,X,Y,Z) - (\nabla^2 h)(X,U, Y,Z), N_a)  \Big) \\
        &= \mathfrak{S}_{U,X,Y} \Big( \alpha(Y,Z) g( R^\perp(U,X)N_a, N_a) - \alpha(R^H(U,X)Y, Z) - \alpha(Y, R^H(U,X)Z)    \Big).
    \end{align*}
    \endgroup
    The assumption of constant sectional curvature means that $R^H(X,Y)Z = c (X \wedge_a Y)Z$ for some constant $c$. Finally, because of the symmetries of the curvature tensor, we obtain 
    \begin{align}
        & \mathfrak{S}_{U,X,Y} \Big(  g( (\nabla R)(U,X,Y,Z) , N_a) + \alpha(U,X) g( R(N_a,Y)Z , N_a) \nonumber \\ &
        + \alpha(U,Y)  g( R(X,N_a)Z , N_a) \Big)  + g( R(X,Y)Z , \nabla_U N_a) \nonumber  \\
        &= c \, \mathfrak{S}_{U,X,Y} \Big( - \alpha((U \wedge_a X)Y, Z) - \alpha(Y, (U \wedge_a X)Z)    \Big).
    \end{align}
    By using the symmetries of $\alpha$ and $g_a$ together with the cyclic sum, we find that the right-hand-side vanishes identically. Hence, we have for constant sectional curvature hypersurfaces the following necessary condition:
    \begin{align} \label{eq:csc_ricci_identities}
        & \mathfrak{S}_{U,X,Y} \Big(  g( (\nabla R)(U,X,Y,Z) , N_a) + \alpha(U,X) g( R(N_a,Y)Z , N_a) \nonumber \\ &
        + \alpha(U,Y)  g( R(X,N_a)Z , N_a) \Big)  + g( R(X,Y)Z , \nabla_U N_a)  \Big)  =0
    \end{align}

    We now consider horizontal and non-horizontal hypersurfaces separately.

    \textit{The hypersurface cannot be horizontal.}
    Consider the frame of \Cref{thm:frame_horizontal}.
    Applying the above equation to $(U,X,Y,Z) = (e^a_3,e^a_4,e^a_2,e^a_3)$, we find that $a=1$ or $a=2/9$. However, neither of these solutions is compatible with \Cref{eq:csc_ricci_identities} for $(U,X,Y,Z) = (e^a_2,e^a_5,e^a_1,e^a_2)$. Hence, no horizontal hypersurface can have constant sectional surface.

    \textit{The hypersurface cannot be non-horizontal.}
    We now work in the frame of \Cref{thm:frame_non_horizontal}. If $2a\neq 1$, and $a \neq 1$, we can solve \Cref{eq:csc_ricci_identities} completely, and find that the hypersurface has to be totally umbilical. However, by \Cref{thm:TU}, this is not possible. For the case $a=1/2$, \Cref{eq:csc_ricci_identities} gives that $h^a(e^a_i,e^a_j) = \delta_{ij} ( f_1 + f_2 \delta_{i2} )$, for some functions $f_1,f_2$. Finally, from the Gauss equation applied to $e^a_1, e^a_2, e^a_3, e^a_5$, we find that $f_2 = 0$, so that the hypersurface has to be totally umbilical. Again, by \Cref{thm:TU}, this is not possible. 

    Finally, we consider the case $a=1$. Solving \Cref{eq:csc_ricci_identities}, and consecutively the Gauss-Codazzi equations, we find that the hypersurface needs to have two different constant principal curvatures. In the classification of \Cite{takagi1973}, this hypersurface is of type A1. Then we know (as well as by a frame computation) that this hypersurface also cannot have constant sectional curvature. This concludes the proof. 
\end{proof}

\section*{Acknowledgements}

We would like to thank Dr M. Anarella for the fruitful discussions and poignant observations during seminars. Further, we would like to thank prof. L. Vrancken and prof J. Van der Veken for their support.

\pdfbookmark{Bibliography}{Bibliography}
\printbibliography

\end{document}